\numberwithin{equation}{subsubsection}
\theoremstyle{plain}                    %
    \newtheorem{thm}[subsubsection]{Theorem}     %
    \newtheorem{prop}[subsubsection]{Proposition}    %
    \newtheorem{corol}[subsubsection]{Corollary}     %
    \newtheorem{lem}[subsubsection]{Lemma}         %
\theoremstyle{definition}               %
    \newtheorem{defin}[subsubsection]{Definition}  %
    \newtheorem{ex}[subsubsection]{Example}    %
\theoremstyle{remark}                   %
    \newtheorem{rem}[subsubsection]{Remark}      %
\newcommand{\R}{\mathbf{R}}     %
\newcommand{\C}{\mathbf{C}}     %
\newcommand{\Z}{\mathbf{Z}}     %
\newcommand{\g}{\mathfrak{g}}   %
\newcommand{\tc}{\mathfrak{t}}  %
\newcommand{\Oo}{\mathcal{O}}   %
\newcommand{\m}{\mathfrak{m}}   %
\newcommand{\T}{\mathscr{T}}    %
\newcommand{\ua}{\underline{a}}   %
\newcommand{\up}{\underline{p}}   %
\newcommand{\bd}{\mathbf{d}}  %
\newcommand{\ud}{\underline{\mathbf{d}}}  %
\newcommand{\uQ}{\underline{Q}}   %
\newcommand{\wm}{\mathcal{WM}_{g, m}^{\tc, \leq \underline{p}}} %
\newcommand{\wmd}{\mathcal{WM}_{g, m}^{\tc, \leq \underline{p}, \ud}} %
\newcommand{\Mgm}{\mathcal{M}_{g, m}} %
\newcommand{\wma}{\mathsf{WM}_{g, m}^{\tc, \leq \underline{p}}} %
\newcommand{\zwma}{\mathsf{WM}_{0, m}^{\tc, \leq \underline{p}}} %
\newcommand{\wmda}{\mathsf{WM}_{g, m}^{\tc, \leq \underline{p}, \ud}} %
\newcommand{\zwmda}{\mathsf{WM}_{0, m}^{\tc, \leq \underline{p}, \ud}} %
\newcommand{\Mgma}{\mathsf{M}_{g, m}} %
\newcommand{\zMgma}{\mathsf{M}_{0, m}} %
\newcommand{\sch}{\mathrm{Sch}_\C}   %
\newcommand{\man}{\mathrm{Man}_\C}   %
\newcommand{\I}{\mathcal{I}}     %
\newcommand{\Spec}{\mathrm{Spec}}     %
\newcommand{\Id}{\mathrm{Id}}   %
\newcommand{\IT}{\mathscr{I \!\! T}}    %
\newcommand{\M}{\mathsf{M}}    %
\title{Moduli spaces of untwisted wild Riemann surfaces}
\author[J.~Douçot]{Jean Douçot}
    \address[J.~Douçot]{"Simion Stoilow" Institute of mathematics of the Romanian Academy
Calea Griviței 21, 010702-Bucharest, Sector 1, Romania}
    \email{jeandoucot@gmail.com}
    \thanks{During this work,
        J.~D. has been funded by FCiências.ID, and by the PNRR Grant CF 44/14.11.2022, ‘Cohomological Hall Algebras of Smooth Surfaces and Applications’, led by O.~Schiffmann.}
\author[G.~Rembado]{Gabriele Rembado}
    \address[G.~Rembado]{Institut Montpelliérain Alexander Grothendieck (IMAG), University of Montpellier, Place Eugène Bataillon 34090, Montpellier (France)}
    \email{gabriele.rembado@umontpellier.fr}
    \thanks{G.~R. is funded by the European Commission under the HORIZON-MSCA project~\href{https://cordis.europa.eu/project/id/101108575}{QuantMod} (grant n.~101108575).}
\author[M.~Tamiozzo]{Matteo Tamiozzo}
    \address[M.~Tamiozzo]{Université Sorbonne Paris Nord, Institut Galilée, LAGA, Villetaneuse, 93430 (France)}
    \email{tamiozzo@math.univ-paris13.fr}
    \thanks{M.~T. carried out part of this work while being a postdoc at Warwick, funded by the ERC grant n.~101001051.}
\keywords{Stacks, isomonodromic deformations, mapping class groups}
\begin{document}
\begin{abstract}
	We construct moduli stacks of wild Riemann surfaces in the (pure) untwisted case, for any complex reductive structure group, and we define the corresponding (pure) wild mapping class groups.
\end{abstract}

\maketitle

\setcounter{tocdepth}{1}
\tableofcontents

\section{Introduction}

\subsection{Wild Riemann surfaces}

Wild (families of) Riemann surfaces, introduced by Boalch~\cite{boa14}, are central objects for the intrinsic (topological) study of isomonodromic deformations of irregular singular meromorphic connections.
Classically, for a pointed compact Riemann surface $(\Sigma, \ua)$, where $\ua=(a_1, \ldots, a_m) \in \Sigma^m$ is an ordered $m$-tuple of distinct points, one can look at the character variety $\mathcal M_{Betti}$ parametrising representations of $\pi_1(\Sigma\smallsetminus \ua)$ valued in a complex reductive algebraic group $G$.
When $(\Sigma, \ua)$ varies in a holomorphic family over a connected complex manifold $B$, one obtains a family of character varieties over $B$, and an action of the fundamental group of $B$ on $\mathcal M_{Betti}$.
Fixing the genus $g$ of the Riemann surface and the number $m$ of marked points and considering the resulting moduli stack of pointed Riemann surfaces, one is led to the action of mapping class groups on character varieties, subject of much research (cf.~\cite{gol97},~\cite{wen11},~\cite{pal14},~\cite{gw20},~\cite{glx21} among many others).

Recall that representations $\pi_1(\Sigma\smallsetminus \ua)\rightarrow G$ correspond to $G$-bundles on $\Sigma$ with a meromorphic connection having regular singularities at $\ua$.
In a series of works~\cite{boa01},~\cite{boa02},~\cite{boa07},~\cite{boa12},~\cite{boa14}, Boalch gradually extended the above theory to the more general setting of irregular singularities, building on previous works~\cite{bjl79},~\cite{mr91},~\cite{lr94} (further references and remarks on the history of the subject can be found in the introductions of~\cite{boa21} and \cite{bdr22}).
The starting point was~\cite{boa01}, which studied generic meromorphic connections on vector bundles on the projective line, showed that the Jimbo--Miwa--Ueno isomonodromic deformation equations~\cite{jmu81} arise from flat symplectic Ehresmann connections (irregular
isomonodromy connections), and noted that they are generalisations of the nonabelian Gau\ss–-Manin connections (cf.~the introduction and Theorem 7.1 in~\cite{boa01}).

This led  to the wild generalisation of nonabelian Hodge theory on curves~\cite{sab99}, \cite{bibo04} and in turn provided the first motivation for the definition of wild Riemann surfaces and wild character varieties (in the untwisted case, which is our focus here), given in~\cite{boa14}.

More precisely, in \emph{loc. cit.}, on the one hand Boalch constructed (algebraically) general wild character varieties, generalising the spaces $\mathcal M_{Betti}$ (cf.~\cite[\S~8]{boa14}, extending the algebraic construction in the generic setting of~\cite{boa07} and complementing the analytic constructions of~\cite{boa01},~\cite{bibo04}).
On the other hand, he defined admissible families of wild Riemann surfaces over a base $B$, i.e., pointed families of Riemann surfaces over $B$ together with an \emph{admissible family of irregular types} (whose definition will be recalled below).
The main result of~\cite{boa14} (Theorem 10.2) asserts that, given an admissible family of wild Riemann surfaces over a base $B$, the resulting wild character varieties assemble into a local system of Poisson varieties over $B$, with a complete flat Ehresmann connection - the irregular isomonodromy connection.
In particular, one obtains an action of the fundamental group of $B$ on the relevant wild character variety.
As envisaged in~\cite[\S~3]{boa18} and~\cite[\S~8]{boa14bis}, constructing moduli stacks of (admissible families of) wild Riemann surfaces would enable to define \emph{wild mapping class groups} and study their action on wild character varieties.

The main aim of this document is to construct such stacks and define wild mapping class groups.
We first recall the definitions of irregular types and wild Riemann surfaces, following~\cite[\S~1]{drt22}.

\subsubsection{Irregular types}
\label{irr-types-riem}

Let $\tc \subset \g$ be a Cartan subalgebra of a finite-dimensional complex reductive Lie algebra; fix integers $p \geq 0$ and $g \geq 0$.
Given a compact Riemann surface $\Sigma$ of genus $g$ and a point $a \in \Sigma$ we consider the completed local ring $\hat{\Oo}_{\Sigma, a}$ of $\Sigma$ at $a$, with maximal ideal $\hat{\m}_{\Sigma, a}$.
We denote by $\T_{\Sigma, a}^{\leq p}$ the quotient $\hat{\m}_{\Sigma, a}^{-p}/\hat{\Oo}_{\Sigma, a}$: it is the $\C$-vector space of germs of meromorphic functions at $a$ with pole order at most $p$, up to holomorphic terms.
An (untwisted) irregular type with pole order bounded by $p$ at $a$ is an element $Q \in \tc \otimes_\C \T^{\leq p}_{\Sigma, a}$~\cite[Definition 7.1]{boa14}.

\subsubsection{Families of wild Riemann surfaces}

Let $B$ be a complex manifold, $m \geq 1$ an integer, and $\up=(p_1, \ldots, p_m)\in \Z_{\geq 0}^m$ an $m$-tuple of nonnegative integers.
A $B$-family of wild Riemann surfaces of genus $g$ with $m$ marked points, and with pole orders at most $\up$, is a triple $(\pi \colon \Sigma\rightarrow B, \ua, \uQ)$ where $\pi \colon \Sigma \rightarrow B$ is a holomorphic family of compact Riemann surfaces of genus $g$, $\ua=(a_1, \ldots, a_m)$ is an $m$-tuple of mutually non-intersecting holomorphic sections of $\pi$, and $\uQ=(Q_1, \ldots, Q_m)$ is an $m$-tuple of families of irregular types $Q_i(b) \in \tc \otimes_\C \T^{\leq p_i}_{\pi^{-1}(b), a_i(b)}$, holomorphically varying with $b \in B$ (cf.~\cite[Definition 10.1]{boa14},~\cite[Definition 1.1]{drt22} and Definition~\ref{def-wrs} below).

\subsection{Aim of the text}

For every complex manifold $B$, we consider the groupoid $\wma(B)$ of $B$-families of wild Riemann surfaces with $m$ marked points and pole orders at most $\up$ (morphisms being isomorphisms commuting with the projection to $B$, respecting the sections and the irregular types).
In the body of the text, we will first show that the assignment $B\mapsto \wma(B)$ is an analytic stack, and the natural map $\wma \longrightarrow \Mgma$ to the stack of $m$-pointed genus $g$ compact Riemann surfaces is (representable by) a vector bundle.
Subsequently, we will study the substacks $\wmda$ of $\wma$ obtained fixing the order $\ud=(\mathbf{d}_1, \ldots, \mathbf{d}_m)$ of the poles of the irregular types after evaluation at the roots of $\g$.
The fundamental groups of these substacks will be the (global) wild mapping class groups whose definition is one of the main aims of this text.
Finally, in the last part of the document we will describe more explicitly $\wma$ and $\wmda$ in some examples; we will then define algebraic versions of these stacks, and show that they are Deligne--Mumford if and only if $2g-2+m+\sum_i \max(\mathbf{d}_i)>0$ - as foreseen in \cite[p. 399]{bdr22} - generalising the analogous result for moduli stacks of curves.

\subsection{General notation and terminology}
\label{gen-not}

The following notation and conventions will be in force throughout the document, unless otherwise stated:

\begin{itemize}
	\item fix a finite-dimensional complex reductive Lie algebra $\g$ and a Cartan subalgebra $\tc$, and let $\Phi$ be the associated root system;

	\item let $r$ be the dimension of $\tc$ as a complex vector space;

	\item fix integers $m \geq 1$ and $g \geq 0$, and a tuple $\up=(p_1, \ldots, p_m) \in \Z_{\geq 0}^m$;

	\item the category of complex manifolds (and holomorphic maps) is denoted by $\man$; the letter $B$ will always denote a complex manifold.

\end{itemize}

Further terminology and background on stacks can be found in appendix~\ref{sec:appendix}.

\section{Wild Riemann surfaces and their moduli}

In this section, we will first recall the definition of families of wild Riemann surfaces over a complex manifold $B$.
Then we will define and study their moduli space.

\subsection{Families of Riemann surfaces and their sections}

Let us start by defining families of (compact) Riemann surfaces of genus $g$ over a complex manifold $B$.
We will call them Riemann surfaces over $B$.

\begin{defin}
	\label{def-an-curve}

	A Riemann surface over $B$ of genus $g$ is a proper holomorphic submersion $\pi \colon \Sigma \rightarrow B$ of complex manifolds, such that all the fibres are Riemann surfaces of genus $g$.
\end{defin}

\subsubsection{Local description of sections}
\label{locsect-anal}

If $\sigma \colon B \rightarrow \Sigma$ is a (holomorphic) section of a Riemann surface over $B$, then the differential of $\sigma$ is injective at every point $b \in B$.
As a consequence of the implicit function theorem, one shows (cf.~\cite[Corollary 1.1.12]{huy05}) that, in suitable local charts around $b$ and $\sigma(b)$, the map $\sigma$ has the form $(z_1, \ldots, z_n)\mapsto (z_1, \ldots, z_n, 0)$.
In particular $\sigma(B)$ is an effective Cartier divisor: the sheaf of $\Oo_{\Sigma}$-ideals $\I_\sigma$ consisting of functions vanishing on $\sigma(B)$ is locally free of rank one.
It sits in a short exact sequence
\begin{equation*}
	0\rightarrow \I_\sigma \rightarrow  \Oo_{\Sigma} \rightarrow \sigma_{*}\Oo_B \rightarrow 0.
\end{equation*}
Furthermore, letting $\Oo_\Sigma(\sigma)=\mathcal{H}om_{\Oo_\Sigma}(\I_\sigma, \Oo_\Sigma)$, the evaluation map $\I_\sigma\otimes_{\Oo_\Sigma}\Oo_\Sigma(\sigma)\rightarrow \Oo_\Sigma$ is an isomorphism.

For an integer $k\geq 0$, let $\I_\sigma^k$ be the $k$-fold product of the ideal sheaf $\I_\sigma$; it is isomorphic to the $k$-fold tensor product of $\I_\sigma$ (over $\Oo_\Sigma$), because the latter is locally free.
The inverse of $\I_\sigma^k$, denoted by $\Oo_\Sigma(k\sigma)$, is isomorphic to the $k$-fold tensor product of $\Oo_\Sigma(\sigma)$.

\subsection{Sheaves of tails of meromorphic functions with bounded pole along a section}
\label{tails}

Fix an integer $k\geq 0$, and a Riemann surface $\pi \colon \Sigma \rightarrow B$ over $B$ with a section $\sigma$. In \cite[\S~10]{boa14} Boalch considers irregular types on $\pi^{-1}(b)$ with a pole at $\sigma(b)$, varying smoothly with $b \in B$.

The aim of this section is to express in sheaf-theoretic terms the notion of ``holomorphic family of irregular types'' on $\Sigma$ with a pole at $\sigma$ of order at most $k$. This will allow us to give a simple proof of Proposition~\ref{irrt-vectbun} below, on which our subsequent arguments are based.

\subsubsection{}

Tensoring the exact sequence $0 \rightarrow \I_\sigma^k \rightarrow \Oo_\Sigma \rightarrow \Oo_\Sigma/\I_\sigma^k \rightarrow 0$ with $\Oo_\Sigma(k\sigma)$ we obtain a short exact sequence
\begin{equation*}
	0 \rightarrow \Oo_\Sigma \rightarrow \Oo_\Sigma(k\sigma) \rightarrow \T_{\Sigma, \sigma}^{\leq k}\rightarrow 0,
\end{equation*}
whose cokernel $\T_{\Sigma, \sigma}^{\leq k}$ will be called the sheaf of tails of meromorphic functions on $\Sigma$ with pole along $\sigma(B)$, of order bounded by $k$. This object is central to our construction: on the one hand, we will use it to define (untwisted) irregular types in \cref{irr-types}; on the other hand, exponential factors of such irregular types are global sections of $\T_{\Sigma, \sigma}^{\leq k}$, cf. \cref{subsec:defwmda}.

\begin{ex}\leavevmode
	\label{tails-ex}

	\begin{enumerate}
		\item Let $\Sigma$ be a compact Riemann surface and let $\sigma \colon \{*\}\rightarrow \Sigma$ be the map with image a point $a \in \Sigma$. Let $\m_{\Sigma, a}$ be the maximal ideal of the local ring $\Oo_{\Sigma, a}$. Then $\T_{\Sigma, a}^{\leq k}$ is the skyscraper sheaf at $a$ with stalk $\m_{\Sigma, a}^{-k}/\Oo_{\Sigma, a}\simeq \hat{\m}_{\Sigma, a}^{-k}/\hat{\Oo}_{\Sigma, a}$.
		\item For a family $\Sigma \rightarrow B$ with a section $\sigma$, choosing charts around a point $b \in B$ and $\sigma(b) \in \Sigma$ as in~\cref{locsect-anal}, meromorphic functions of the form 		$\frac{f}{z_{n+1}^k}$ with $f \colon \C^{n+1} \rightarrow \C$ holomorphic give rise to sections 				of $\T_{\Sigma, \sigma}^{\leq k}$ in a neighbourhood of $\sigma(b)$.
		      In general, sections of $\T_{\Sigma, \sigma}^{\leq k}$ can be written in this form only locally.
	\end{enumerate}
\end{ex}

\subsubsection{Properties of $\T_{\Sigma, \sigma}^{\leq k}$}

For $k=0$ the sheaf $\T_{\Sigma, \sigma}^{\leq k}$ is the zero sheaf; let us now assume that $k\geq 1$. For every $x\in \Sigma \smallsetminus \sigma(B)$ there is an open neighbourhood $U$ of $x$ which does not intersect $\sigma(B)$, hence $\I_{\sigma}|U=\Oo_{\Sigma}|U$.
It follows that $\T_{\Sigma, \sigma}^{\leq k}$ is a sheaf supported on $\sigma(B)$ and annihilated by $\I_\sigma^k$, so it is the pushforward of a unique sheaf, denoted by $\T_{\sigma}^{\leq k}$, on the complex analytic space $(\sigma(B), \Oo_\Sigma/\I_\sigma^k)$.
In particular, even though $\T_{\Sigma, \sigma}^{\leq k}$ was defined as a quotient of a sheaf of meromorphic functions on the whole $\Sigma$, it only carries information about germs of such functions around $\sigma(B)$.
Precisely, if $V \subset U \subset \Sigma$ are two opens having the same intersection with $\sigma(B)$, then $\T_{\Sigma, \sigma}^{\leq k}(U)=\T_{\Sigma, \sigma}^{\leq k}(V)$.

We will denote by $B^{(k)}$ the analytic space $(\sigma(B), \Oo_\Sigma/\I_\sigma^k)$, and by $\pi^{(k)} \colon B^{(k)}\rightarrow B$ the composition of  $\pi$ and the inclusion of $B^{(k)}$ in $\Sigma$.

\begin{lem}
	\label{push-free}

	The pushforward $\pi_*\T_{\Sigma, \sigma}^{\leq k}$ is a locally free sheaf of rank $k$ on $B$.
\end{lem}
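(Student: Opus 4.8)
The statement is local on $B$, since forming $\pi_*$ commutes with restriction to an open $U \subseteq B$ and local freeness may be checked on the members of an open cover. The plan is therefore to fix $b \in B$ and produce an explicit frame of rank $k$ over a neighbourhood of $b$. I would choose charts as in \cref{locsect-anal}: a polydisc $U \subseteq \C^n$ around $b$ with coordinates $z_1, \dots, z_n$, together with a polydisc around $\sigma(b)$ with coordinates $z_1, \dots, z_{n+1}$ in which $\pi$ is the projection $(z_1, \dots, z_{n+1}) \mapsto (z_1, \dots, z_n)$ and $\sigma(B) = \{z_{n+1} = 0\}$, so that $\I_\sigma$ is generated by $z_{n+1}$ and $\Oo_\Sigma(k\sigma) = z_{n+1}^{-k}\Oo_\Sigma$ on the chart.

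Next I would invoke the locality of $\T_{\Sigma, \sigma}^{\leq k}$ established above: as this sheaf is supported on $\sigma(B)$ and satisfies $\T_{\Sigma, \sigma}^{\leq k}(O) = \T_{\Sigma, \sigma}^{\leq k}(O')$ whenever $O' \subseteq O$ share the same intersection with $\sigma(B)$, after shrinking $U$ so that $\sigma(U)$ lies in the chart I may identify $(\pi_*\T_{\Sigma, \sigma}^{\leq k})(U) = \T_{\Sigma, \sigma}^{\leq k}(\pi^{-1}(U))$ with the sections of $z_{n+1}^{-k}\Oo_\Sigma/\Oo_\Sigma$ over the chart. A meromorphic germ of pole order at most $k$ along $z_{n+1} = 0$ can be written $f\, z_{n+1}^{-k}$ with $f$ holomorphic; expanding $f = \sum_{j \geq 0} f_j(z_1, \dots, z_n)\, z_{n+1}^{j}$ with holomorphic coefficients $f_j \in \Oo_B(U)$ and discarding the holomorphic part (the terms with $j \geq k$) yields a unique representative $\sum_{l=1}^{k} f_{k-l}\, z_{n+1}^{-l}$. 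This sets up an $\Oo_B(U)$-linear bijection $(\pi_*\T_{\Sigma, \sigma}^{\leq k})(U) \cong \Oo_B(U)^{\oplus k}$, exhibiting the images of $z_{n+1}^{-1}, \dots, z_{n+1}^{-k}$ as a frame and proving local freeness of rank $k$.

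Conceptually this is the computation $\pi_*\T_{\Sigma, \sigma}^{\leq k} = \pi^{(k)}_*\T_\sigma^{\leq k}$, where $\T_\sigma^{\leq k}$ is the invertible $\Oo_{B^{(k)}}$-module $\Oo_\Sigma(k\sigma)\otimes_{\Oo_\Sigma}\Oo_\Sigma/\I_\sigma^k$ (one checks $\I_\sigma^k\Oo_\Sigma(k\sigma)=\Oo_\Sigma$, whence this quotient recovers $\Oo_\Sigma(k\sigma)/\Oo_\Sigma$) and $\pi^{(k)} \colon B^{(k)} \to B$ is a homeomorphism of underlying spaces; pushing an invertible module along it reduces the claim to the local freeness of rank $k$ of $\pi^{(k)}_*\Oo_{B^{(k)}} = \pi_*(\Oo_\Sigma/\I_\sigma^k)$, which carries the parallel frame $1, z_{n+1}, \dots, z_{n+1}^{k-1}$. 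I expect the only genuine point to verify to be the uniqueness and holomorphy of these Taylor tails with parameters, namely that a holomorphic function on the $(n+1)$-dimensional chart expands uniquely with coefficients holomorphic on $U$ and that passing to the quotient by $\Oo_\Sigma$ kills exactly the nonnegative powers of $z_{n+1}$; this is the standard power-series expansion of holomorphic functions of several variables, after which the gluing across charts and the $\Oo_B$-linearity are automatic, since $\pi_*\T_{\Sigma, \sigma}^{\leq k}$ is an intrinsically defined $\Oo_B$-module.
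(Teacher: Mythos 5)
Your proof is correct, and its skeleton matches the paper's: localise on $B$, pass to the coordinates of \cref{locsect-anal}, and exhibit the classes of $z_{n+1}^{-1},\ldots,z_{n+1}^{-k}$ as a frame. The genuine difference is the engine used to get the rank-$k$ freeness. The paper deliberately routes this step through the purely algebraic Lemma~\ref{sect-free}: the section $\sigma$ factors through $B^{(k)}$, giving a ring section of $\Oo_\Sigma/z_{n+1}^k\Oo_\Sigma \rightarrow \Oo_\Sigma/z_{n+1}\Oo_\Sigma \simeq \Oo_B$, and that lemma (an induction valid over any commutative ring) produces the basis $1, z_{n+1}, \ldots, z_{n+1}^{k-1}$ of $\pi^{(k)}_*\Oo_{B^{(k)}}$. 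You instead use the analytic fact that a holomorphic function on a product chart expands uniquely as $\sum_{j\geq 0} f_j(z_1,\ldots,z_n)z_{n+1}^j$ with holomorphic coefficients. Both are valid, and your third paragraph correctly identifies your computation as the concrete incarnation of the paper's reduction via $B^{(k)}$; what you lose is portability: the isolation of Lemma~\ref{sect-free} is exactly what lets the paper reuse the same argument verbatim for schemes in \cref{sect-wildalg} (cf.~\cref{subsec-wac}), where convergent expansions in a coordinate are not available, whereas your argument is confined to the analytic category. One point you should make explicit rather than call automatic: as noted in Example~\ref{tails-ex}, a section of $\T_{\Sigma,\sigma}^{\leq k}$ over $\pi^{-1}(U)$ need not lift globally to a meromorphic function, so the expansion must be applied to local lifts; the resulting tails $\sum_{l=1}^{k} f_{k-l}z_{n+1}^{-l}$ then agree on overlaps precisely because such a representative is unique (a Laurent polynomial in $z_{n+1}^{-1}$ without constant term that is holomorphic across $z_{n+1}=0$ vanishes), and only this uniqueness makes the gluing automatic. (Alternatively, on a polydisc chart a global lift exists since $H^1$ of a Stein open with coefficients in $\Oo$ vanishes, but the uniqueness argument is the elementary way out.)
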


\begin{proof}
	The sheaf $\pi_*\T_{\Sigma, \sigma}^{\leq k}$ coincides with the pushforward of $\T_{\sigma}^{\leq k}$ via the map $\pi^{(k)} \colon B^{(k)}\rightarrow B$.
	Choosing local coordinates around a point of $\sigma(B)$ as in~\cref{locsect-anal}, we see that we can write locally $\T_{\Sigma, \sigma}^{\leq k}=z_{n+1}^{-k}\Oo_{\Sigma}/\Oo_{\Sigma}$, hence $\T_{\sigma}^{\leq k}$ is locally free of rank one.

	On the other hand, we claim that $\pi^{(k)}_*\Oo_{B^{(k)}}$ is a locally free $\Oo_B$-module of rank $k$.
	To prove this, we may, and will, replace $B$ by an open neighbourhood of any of its points, and work with the same local coordinates as before.
	The section $\sigma \colon B \rightarrow \Sigma$ factors through $B^{(k)}$, inducing the natural projection $\Oo_{\Sigma}/z_{n+1}^k\Oo_{\Sigma}\rightarrow \Oo_{\Sigma}/z_{n+1}\Oo_{\Sigma}\simeq\Oo_{B}$.
	The map $\pi^{(k)}$ induces on global sections a section of this projection; hence, the claim follows from Lemma~\ref{sect-free} just below.
\end{proof}

\begin{lem}
	\label{sect-free}

	Let $A$ be a commutative ring with unit, and $0 \neq f\in A$ a nonzerodivisor which is not a unit. Let $k\geq 1$ be an integer and $s \colon A/(f)\rightarrow A/(f^k)$ a ring morphism which is a section of the projection $q \colon A/(f^k)\rightarrow A/(f)$.
	The map $s$ endows $A/(f^k)$ with the structure of a free $A/(f)$-module of rank $k$ with basis $1, f, \ldots, f^{k-1}$.
\end{lem}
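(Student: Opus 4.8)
The plan is to verify directly that the elements $1,\overline f,\dots,\overline f^{\,k-1}$ of $A/(f^{k})$, where $\overline a$ denotes the class of $a\in A$, form a free basis for the $A/(f)$-module structure induced by $s$ — the structure in which $\alpha\in A/(f)$ acts on $A/(f^{k})$ by multiplication by $s(\alpha)$. Two elementary facts will be used throughout: first, $q\circ s=\mathrm{id}_{A/(f)}$, since $s$ is a section of $q$; second, $q(\overline f)=0$, because $\overline f$ reduces to the class of $f$ in $A/(f)$. Combining them gives $q\bigl(s(\alpha)\,\overline f^{\,i}\bigr)=\alpha\,q(\overline f)^{\,i}$, which equals $\alpha$ for $i=0$ and vanishes for $i\ge 1$; so applying $q$ acts as ``evaluation at $\overline f=0$'' and will let me isolate the coefficients one at a time.

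For spanning, I would start from an arbitrary $m\in A/(f^{k})$ and peel off coefficients recursively. Set $m_{0}=m$. Since $m_{j}-s(q(m_{j}))$ lies in $\ker q=\overline f\cdot A/(f^{k})$, I can write $m_{j}=s(q(m_{j}))+\overline f\,m_{j+1}$ for some $m_{j+1}\in A/(f^{k})$. Unwinding this telescoping relation yields $m=\sum_{i=0}^{k-1}\overline f^{\,i}\,s(q(m_{i}))+\overline f^{\,k}m_{k}$, and the last term vanishes because $\overline f^{\,k}=0$ in $A/(f^{k})$. Rewriting $\overline f^{\,i}s(q(m_{i}))$ as $q(m_{i})\cdot\overline f^{\,i}$ then exhibits $m$ as an $A/(f)$-linear combination of $1,\overline f,\dots,\overline f^{\,k-1}$.

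For linear independence, suppose $\sum_{i=0}^{k-1}s(\gamma_{i})\,\overline f^{\,i}=0$ with $\gamma_{i}\in A/(f)$, and I would prove $\gamma_{j}=0$ by induction on $j$, assuming $\gamma_{0}=\dots=\gamma_{j-1}=0$ already established. The relation then becomes $\overline f^{\,j}\cdot y=0$ with $y=\sum_{i\ge j}s(\gamma_{i})\,\overline f^{\,i-j}$, where $q(y)=\gamma_{j}$ by the evaluation remark above. The crux — and the only place the hypothesis on $f$ enters — is to cancel the factor $\overline f^{\,j}$: choosing a lift $\tilde y\in A$ of $y$, the vanishing of $\overline f^{\,j}y$ means $f^{j}\tilde y\in(f^{k})$, say $f^{j}\tilde y=f^{k}z$; since $f$, and hence $f^{j}$, is a nonzerodivisor, cancellation gives $\tilde y=f^{k-j}z$, so $y\in\overline f^{\,k-j}\cdot A/(f^{k})$. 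Applying $q$ and using $k-j\ge 1$ forces $\gamma_{j}=q(y)=0$, which closes the induction.

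I expect the cancellation of powers of $f$ to be the only genuinely delicate point. In $A/(f^{k})$ multiplication by $\overline f$ is far from injective, so one cannot cancel there; the argument must instead pass through the nonzerodivisor property in $A$ itself, which is exactly why that hypothesis is needed. (The assumption that $f$ is not a unit plays no role in these computations beyond guaranteeing $A/(f)\neq 0$, so that ``free of rank $k$'' is a meaningful assertion.)
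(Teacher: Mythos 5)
Your proof is correct and rests on the same two ingredients as the paper's: using $q\circ s=\mathrm{id}$ to peel off coefficients, and lifting to $A$ so that the nonzerodivisor hypothesis lets you cancel powers of $f$ (exactly the step $f\cdot x=0$ in $A/(f^k)$ implies $x\in (f^{k-1})$). The paper merely packages this as an induction on $k$ — existence and uniqueness of the one-step decomposition $a=s(q(a))+fa_1$, with $a_1$ unique in $A/(f^{k-1})$ — whereas you unroll the recursion explicitly and check spanning and linear independence separately; the difference is organizational, not mathematical.
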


\begin{proof}
	By induction on $k$; for $k=1$ the claim is clear, so let us suppose that $k\geq 2$.

	Take $a\in A$ and let $\alpha_0=q(a)$.
	Then $q(a-s(\alpha_0))=q(a)-q\circ s \circ q(a)=0$, so we can write $a=s(\alpha_0)+fa_1$ for some $a_1 \in A/(f^k)$.
	Assume that $\alpha_0' \in A/(f), a_1'\in A/(f^k)$ also satisfy $a=s(\alpha_0')+fa_1'$.
	Then $s(\alpha_0-\alpha_0')=f(a_1'-a_1)$; taking the image via $q$ we find that $\alpha_0=\alpha_0'$, hence $f(a_1'-a_1)=0 \in A/(f^k)$.
	As $f$ is not a zero divisor, we deduce that $a_1'-a_1\in (f^{k-1})$.
	Therefore, we have proved that $\alpha_0$ is unique, and $a_1$ is unique in $A/(f^{k-1})$, which implies the statement by induction.
\end{proof}

\subsubsection{Functoriality}
\label{funct-tail}

For $k \geq 0$, let us set $\T_{|B}^{\leq k}=\pi_*\T_{\Sigma, \sigma}^{\leq k}$.
Let $\varphi \colon B'\rightarrow B$ be a holomorphic map, $\pi' \colon \Sigma'\rightarrow B'$ the base change of $\pi$, and $\sigma' \colon B'\rightarrow \Sigma'$ the pullback of $\sigma$.
We have a commutative diagram

\begin{center}
	\begin{tikzcd}
		\Sigma' \arrow[r, "\psi"] \arrow[dd, bend left, "\pi'"] & \Sigma \arrow[dd, bend left, "\pi"]\\
		B'^{(k)} \arrow[u]
		& B^{(k)} \arrow[u] \\
		B' \arrow[u] \arrow[uu, bend left, "\sigma'"] \arrow[r, swap, "\varphi"]
		& \arrow[u] \arrow[uu, bend left, "\sigma"] B
	\end{tikzcd},
\end{center}
and the square with arrows $\sigma', \sigma, \varphi$ and $\psi$ is Cartesian, hence $\sigma'(B')\subset \Sigma'$ is cut out by the ideal sheaf image of $\psi^*\mathcal{I}_\sigma$.
In other words, we have $\sigma'_*\Oo_{B'}=\Oo_{\Sigma'}/\psi^*\mathcal{I}_\sigma=\psi^*\sigma_*\Oo_B$.

\begin{lem}
	\label{funct-lem}

	The following identities hold:

	\begin{enumerate}
		\item $\psi^*\mathcal{I}_{\sigma}=\mathcal{I}_{\sigma'}$;

		\item $\psi^*\Oo_\Sigma(\sigma)=\Oo_{\Sigma'}(\sigma')$;

		\item $\psi^*\T_{\Sigma, \sigma}^{\leq k}=\T_{\Sigma', \sigma'}^{\leq k}$.
	\end{enumerate}
\end{lem}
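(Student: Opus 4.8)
The plan is to establish (1) first and then deduce (2) and (3) by formal manipulations with pullbacks of invertible sheaves. For (1), I would pull back the short exact sequence $0 \to \I_\sigma \to \Oo_\Sigma \to \sigma_*\Oo_B \to 0$ along $\psi$. Since $\psi^{*}$ is right exact and $\psi^{*}\sigma_*\Oo_B = \sigma'_*\Oo_{B'}$ (noted just before the statement), this yields a right-exact sequence $\psi^{*}\I_\sigma \to \Oo_{\Sigma'} \to \sigma'_*\Oo_{B'} \to 0$, whose first arrow has image the ideal sheaf $\I_{\sigma'}$ cutting out $\sigma'(B')$. I thus obtain a surjection $\psi^{*}\I_\sigma \twoheadrightarrow \I_{\sigma'}$. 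Both sheaves are invertible $\Oo_{\Sigma'}$-modules --- the source because $\I_\sigma$ is locally free of rank one and pullback preserves this, the target by the local description in \cref{locsect-anal} --- and a surjection between invertible sheaves is an isomorphism (locally it is multiplication by a function generating the unit ideal, hence a unit). This proves (1).

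Part (2) is then formal: as $\Oo_\Sigma(\sigma) = \mathcal{H}om_{\Oo_\Sigma}(\I_\sigma, \Oo_\Sigma)$ with $\I_\sigma$ locally free of finite rank, pullback commutes with this sheaf-$\mathcal{H}om$, giving $\psi^{*}\Oo_\Sigma(\sigma) = \mathcal{H}om_{\Oo_{\Sigma'}}(\psi^{*}\I_\sigma, \Oo_{\Sigma'})$. Feeding in (1), the right-hand side is $\mathcal{H}om_{\Oo_{\Sigma'}}(\I_{\sigma'}, \Oo_{\Sigma'}) = \Oo_{\Sigma'}(\sigma')$.

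For (3) I would pull back the defining sequence $0 \to \Oo_\Sigma \to \Oo_\Sigma(k\sigma) \to \T_{\Sigma, \sigma}^{\leq k} \to 0$. Right-exactness of $\psi^{*}$ identifies $\psi^{*}\T_{\Sigma, \sigma}^{\leq k}$ with the cokernel of $\psi^{*}\Oo_\Sigma \to \psi^{*}\Oo_\Sigma(k\sigma)$. Now $\psi^{*}\Oo_\Sigma = \Oo_{\Sigma'}$, and since $\Oo_\Sigma(k\sigma)$ is the $k$-fold tensor power of $\Oo_\Sigma(\sigma)$ and pullback is monoidal, part (2) gives $\psi^{*}\Oo_\Sigma(k\sigma) = \Oo_{\Sigma'}(k\sigma')$. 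The inclusion in the sequence is built functorially from $\I_\sigma^{k} \hookrightarrow \Oo_\Sigma$ by tensoring with $\Oo_\Sigma(k\sigma)$ and applying the evaluation isomorphism of \cref{locsect-anal}; by (1) and (2) its pullback is the corresponding canonical inclusion $\Oo_{\Sigma'} \hookrightarrow \Oo_{\Sigma'}(k\sigma')$ defining $\T_{\Sigma', \sigma'}^{\leq k}$. Hence the cokernels agree, proving (3).

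The only genuine subtlety lies in (1): because $\psi^{*}$ is merely right exact, pulling back the inclusion $\I_\sigma \hookrightarrow \Oo_\Sigma$ produces a map $\psi^{*}\I_\sigma \to \Oo_{\Sigma'}$ that could a priori fail to be injective, and the content is precisely that the induced surjection onto $\I_{\sigma'}$ is an isomorphism of line bundles. Once this is in hand, (2) and (3) are bookkeeping. As a safety check, one can instead verify all three identities in the charts of \cref{locsect-anal}, where $\I_\sigma = (z_{n+1})$ and $\psi^{*}z_{n+1}$ is a nonzerodivisor cutting out $\sigma'$, so every sheaf is free and the identities are manifest; the coordinate-free route is preferable, as it also exhibits the naturality used in (3).
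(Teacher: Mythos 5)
Your proposal is correct and follows essentially the same route as the paper: part (1) by pulling back the ideal-sheaf exact sequence, using $\psi^*\sigma_*\Oo_B=\sigma'_*\Oo_{B'}$, and observing that the resulting surjection of invertible $\Oo_{\Sigma'}$-modules is an isomorphism, then (2) and (3) by formal pullback compatibilities and right-exactness. The only cosmetic difference is in (2), where you invoke compatibility of pullback with $\mathcal{H}om$ out of a locally free sheaf, whereas the paper pulls back the evaluation isomorphism $\I_\sigma\otimes_{\Oo_\Sigma}\Oo_\Sigma(\sigma)\simeq\Oo_\Sigma$ and identifies $\psi^*\Oo_\Sigma(\sigma)$ as the inverse of $\I_{\sigma'}$; the two arguments are equivalent bookkeeping.
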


\begin{proof}\leavevmode
	\begin{enumerate}
		\item Pulling back the exact sequence $0 \rightarrow \I_\sigma \rightarrow \Oo_\Sigma \rightarrow \sigma_*\Oo_B\rightarrow 0$ via $\psi$, and using that $\psi^*\sigma_*\Oo_B=\sigma'_*\Oo_{B'}$, we get an exact sequence
		      \begin{equation*}
			      \psi^*\I_\sigma \rightarrow \Oo_{\Sigma'} \rightarrow \sigma'_*\Oo_{B'}\rightarrow 0.
		      \end{equation*}
		      Therefore we obtain a surjection $\psi^*\I_\sigma\rightarrow \I_{\sigma'}$ of invertible $\Oo_{\Sigma'}$-modules, which is necessarily an isomorphism.

		\item Pulling back the identity $\I_\sigma\otimes_{\Oo_\Sigma}\Oo_\Sigma(\sigma)\simeq \Oo_{\Sigma}$ and using (1) we deduce that $\psi^*(\Oo_\Sigma(\sigma))$ is the inverse of $\I_{\sigma'}$, hence it coincides with $\Oo_{\Sigma'}(\sigma')$.

		\item The $k$-fold tensor product of the isomorphism in (2) yields an isomorphism $\psi^*\Oo_\Sigma(k\sigma)=\Oo_{\Sigma'}(k\sigma')$, which implies the claim by right-exactness of pullback. \qedhere
	\end{enumerate}
\end{proof}

By adjunction we have a natural transformation $\Id\rightarrow \psi_*\psi^*$, inducing $\pi_*\rightarrow \pi_*\psi_*\psi^*$, so $\pi_*\rightarrow \varphi_*\pi'_*\psi^*$ and finally $\varphi^*\pi_*\rightarrow \pi'_*\psi^*$.
Applying these functors to $\T_{\Sigma, \sigma}^{\leq k}$ and using Lemma~\ref{funct-lem} we get a map
\begin{equation}
	\label{eq:bc_onB}
	\varphi^*\T_{|B}^{\leq k}\rightarrow \T_{|B'}^{\leq k}.
\end{equation}

\begin{lem}
	\label{bc-onB}

	The map~\eqref{eq:bc_onB} is an isomorphism.
\end{lem}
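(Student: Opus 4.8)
The plan is to reduce the statement to a purely local computation on $B$, exploiting that the sheaf $\T_{\Sigma,\sigma}^{\leq k}$ is supported on the section and that its pushforward is locally free by Lemma~\ref{push-free}. Being an isomorphism of $\Oo_{B'}$-modules is a local condition, so it suffices to treat a neighbourhood of an arbitrary point of $B'$, which maps to a point of $B$ near which we may pick the adapted coordinates $(z_1,\dots,z_n,z_{n+1})$ of~\cref{locsect-anal}, with $\sigma$ given by $z_{n+1}=0$. Note that Lemma~\ref{funct-lem}(3) already provides the identification $\psi^*\T_{\Sigma,\sigma}^{\leq k}=\T_{\Sigma',\sigma'}^{\leq k}$, so the only remaining content of the statement is that $\pi_*$ commutes with the base change $\varphi$ on this particular sheaf, i.e.\ that the base-change morphism $\varphi^*\pi_*\to\pi'_*\psi^*$ of~\eqref{eq:bc_onB} is invertible on $\T_{\Sigma,\sigma}^{\leq k}$.

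First I would make both sides explicit in the adapted coordinates. As in the proof of Lemma~\ref{push-free} one has $\T_{\Sigma,\sigma}^{\leq k}=z_{n+1}^{-k}\Oo_\Sigma/\Oo_\Sigma$ locally, and $\T_{|B}^{\leq k}=\pi_*\T_{\Sigma,\sigma}^{\leq k}$ is free over $\Oo_B$ on the classes of $z_{n+1}^{-1},\dots,z_{n+1}^{-k}$. Since the square relating $\sigma',\sigma,\varphi,\psi$ is Cartesian, the coordinate $z_{n+1}$ pulls back along $\psi$ to a local defining function for $\sigma'$; running the same computation for $\pi'\colon\Sigma'\to B'$ exhibits $\T_{|B'}^{\leq k}$ as the free $\Oo_{B'}$-module on the classes of $\psi^*z_{n+1}^{-1},\dots,\psi^*z_{n+1}^{-k}$.

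Next I would unwind the definition of~\eqref{eq:bc_onB} on these frames. Tracing through the adjunction $\Id\to\psi_*\psi^*$ together with the isomorphisms of Lemma~\ref{funct-lem} --- in particular $\psi^*\Oo_\Sigma(k\sigma)=\Oo_{\Sigma'}(k\sigma')$, which sends the local generator coming from $z_{n+1}^{-k}$ to its pullback --- one checks that the map carries $\varphi^*[\,z_{n+1}^{-j}\,]$ to $[\,\psi^*z_{n+1}^{-j}\,]$ for $j=1,\dots,k$. Thus~\eqref{eq:bc_onB} takes the chosen frame of $\varphi^*\T_{|B}^{\leq k}$ to the frame of $\T_{|B'}^{\leq k}$ found above, and is therefore an isomorphism.

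The main obstacle is conceptual rather than computational: for an arbitrary holomorphic $\varphi$, not assumed flat, the base-change morphism $\varphi^*\pi_*\to\pi'_*\psi^*$ is not an isomorphism in general, so one cannot invoke a formal base-change theorem directly. What rescues the argument is precisely the geometry recorded above --- $\T_{\Sigma,\sigma}^{\leq k}$ is supported on $\sigma(B)$, which is finite over $B$, and $\pi_*\T_{\Sigma,\sigma}^{\leq k}$ is locally free of rank $k$ --- and the coordinate computation makes this transparent while sidestepping any flatness hypothesis on $\varphi$. Should one prefer a coordinate-free proof, the same point can be made by factoring $\pi_*$ through pushforward along the finite morphism $\pi^{(k)}\colon B^{(k)}\to B$: this pushforward is exact and commutes with arbitrary base change on the coherent, $B$-flat sheaf $\T_\sigma^{\leq k}$, which again yields the claim.
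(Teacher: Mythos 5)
Your proof is correct, and its primary route is genuinely different from the paper's. The paper argues abstractly: it constructs the map $\gamma\colon B'^{(k)}\rightarrow B^{(k)}$ using \cite[Tag 01HQ]{stacks-project} and Lemma~\ref{funct-lem}, checks that the square formed by $\gamma,\varphi,\pi^{(k)},\pi'^{(k)}$ is Cartesian, identifies $\gamma^*\T_{\sigma}^{\leq k}=\T_{\sigma'}^{\leq k}$, and concludes by the base change result~\cite[Theorem 3.4, p.~116]{basta76} for the finite morphism $\pi^{(k)}$ applied to the $B$-flat sheaf $\T_\sigma^{\leq k}$ --- which is precisely the coordinate-free alternative you sketch in your closing paragraph. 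Your main argument instead runs the local-frame computation underlying Lemma~\ref{push-free} and~\cref{locstr-str} on both $B$ and $B'$ and verifies that~\eqref{eq:bc_onB} carries frame to frame; this is elementary, self-contained, and makes transparent why no flatness hypothesis on $\varphi$ is needed. The one step you should spell out is why ``running the same computation'' is legitimate on $\Sigma'$: the function $\psi^*z_{n+1}$ is a local defining function for $\sigma'$ but not a priori a chart coordinate. Either observe that $(u_1,\dots,u_p,\psi^*z_{n+1})$ \emph{are} adapted coordinates near a point of $\sigma'(B')$ --- the map $(\pi',\psi^*z_{n+1})$ is a local biholomorphism because $z_{n+1}$ restricts to a coordinate on each fibre of $\pi$ --- or note that the negative powers of \emph{any} local defining function give a frame of $\T^{\leq k}_{|B'}$, since the change of basis to coordinate powers is triangular with units on the diagonal. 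As for what each approach buys: your computation is more hands-on and pinpoints the geometric reason (support finite over the base, locally free pushforward) that rescues base change for non-flat $\varphi$; the paper's argument, being coordinate-free, transfers verbatim to the algebraic setting of~\cref{sect-wildalg}, where the paper indeed reuses it with \cite[Tag 02KG]{stacks-project} in place of the analytic base change theorem.
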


\begin{proof}
	By~\cite[\href{https://stacks.math.columbia.edu/tag/01HQ}{Tag 01HQ}]{stacks-project} and Lemma~\ref{funct-lem} there is a unique map $\gamma \colon B'^{(k)}\rightarrow B^{(k)}$ making the top square of the diagram in~\cref{funct-tail} Cartesian.

	Then we claim that the following diagram is Cartesian as well:
	\begin{center}
		\begin{tikzcd}
			B'^{(k)} \arrow[d, swap, "\pi'^{(k)}"] \arrow[r, "\gamma"] & B^{(k)} \arrow[d, "\pi^{(k)}"] \\
			B' \arrow[r, swap, "\varphi"] & B
		\end{tikzcd}.
	\end{center}
	Indeed
	\begin{equation*}
		B^{(k)}\times_B B'= B^{(k)}\times_\Sigma(\Sigma\times_B B')=B^{(k)}\times_\Sigma \Sigma'=B'^{(k)}.
	\end{equation*}

	Now $\T_{\sigma}^{\leq k}$ is the pullback of $\T_{\Sigma, \sigma}^{\leq k}$ (cf.~the proof of~\cite[\href{https://stacks.math.columbia.edu/tag/08KS}{Tag 08KS}]{stacks-project}), hence using Lemma~\ref{funct-lem} we deduce that $\gamma^* \T_{\sigma}^{\leq k}=\T_{\sigma'}^{\leq k}$.
	The lemma follows from the natural isomorphism $\varphi^*\pi^{(k)}_*(\T_\sigma^{\leq k})\simeq \pi'^{(k)}_*\gamma^*(\T_\sigma^{\leq k})$, which is the base change result~\cite[Theorem 3.4, p. 116]{basta76}.
\end{proof}

\subsection{Irregular types}
\label{irr-types}

Let $\pi \colon \Sigma \rightarrow B$ be a Riemann surface over $B$ with a section $\sigma$, and $k \geq 0$ an integer.
We define an \emph{irregular type} (on $B$) with pole at $\sigma$ of order at most $k$ to be a global section of the sheaf $\tc\otimes_\C \T_{\Sigma, \sigma}^{\leq k}$.

\subsubsection{Irregular type of a connection on a framed bundle}

Assume in this subsection that $B$ is a point, and denote the image of $\sigma$ by $a$. Let $K$ be the fraction field of the completion of the local ring $\mathcal{O}_{\Sigma, a}$, with its valuation $v: K \rightarrow \Z \cup \{+\infty\}$. For $l \in \Z$, let $K_{\geq l}$ be the set of elements $x \in K$ such that $v(x) \geq l$. We will denote the valuation ring $K_{\geq 0}$ by $\mathcal{O}_K$, and the maximal ideal $K_{\geq 1}$ of $\mathcal{O}_K$ by $\mathfrak{m}$. With this notation, an irregular type with pole at $\sigma$ of order at most $k$ is an element of $\tc \otimes_\C (K_{\geq -k}/\mathcal{O}_K)$. Let us recall how, for $\mathfrak{g}=\mathfrak{gl}_r$ and $\mathfrak{t}$ the Cartan algebra of diagonal matrices, one can attach to certain ``framed'' vector bundles with meromorphic connection on $\Sigma$ a unique irregular type. This construction is not used elsewhere in this document, but we include it because it is the main motivation for the definition of irregular type. See \cite[\S 2]{boa02} for a similar discussion in the generic case, for general $\mathfrak{g}$.

Let $\mathcal{V}$ be a vector bundle of rank $r$ on $\Sigma$, and $\nabla$ a meromorphic connection on $\mathcal{V}$ with a pole at $a$ of order at most $k+1$. Looking at the stalk at $a$, and further pulling back to the completed local ring, we obtain a free $\mathcal{O}_K$-module $V$ of rank $r$ together with a meromorphic connection, abusively still denoted by the same symbol,
\begin{equation*}
	\nabla: V \rightarrow \hat{\Omega}^1_{K} \otimes_{\mathcal{O}_K} V,
\end{equation*}
where $\hat{\Omega}^1_{K}$ is the module of continuous Kähler differentials of $K/\C$. Choosing a basis $(e_1, \ldots, e_r)$ of $V$ we obtain an isomorphism of $\Oo_K$-modules $V \simeq \Oo_K^r$, hence there exists a unique $r \times r$ matrix $M$ with coefficients $\omega_{ij} \in \hat{\Omega}^1_{K}$ such that $\nabla(e_i)=\sum_{j=1}^r \omega_{ij} \otimes e_j$ for $1 \leq i \leq r$. Changing basis - replacing the column matrix $E$ with entries $e_1, \ldots, e_r$ by $g E$ for $g \in \mathrm{GL}_r(\Oo_K)$ - results in replacing $M$ by $g M g^{-1}+\mathrm{d} g g^{-1}$.

Let $\hat{\Omega}^1_{\Oo_K}$ be the module of continuous Kähler differentials of $\Oo_K/\C$, $\hat{\Omega}^1_{K, \geq -1}=\hat{\Omega}^1_{\Oo_K}\otimes_{\Oo_K} K_{\geq -1}$ and $\hat{\Omega}^1_{K, <-1}=\hat{\Omega}^1_{K}/\hat{\Omega}^1_{K, \geq -1}$. For $\omega \in \hat{\Omega}^1_{K}$, we denote by $\bar{\omega}$ its image in $\hat{\Omega}^1_{K, <-1}$; let $\bar{M}$ be the matrix with entries $\bar{\omega}_{ij}$. We say that $\nabla$ is \emph{untwisted} if there exists a basis of $V$ such that $\bar{M}$ is diagonal (for a general meromorphic connection, one needs to replace $K$ by a finite extension in order to obtain a diagonal $\bar{M}$, see \cite[Theorem 1]{turr55}). We will explain how to attach to such a $\nabla$ an element of $\tc \otimes (K_{\geq -k}/\mathcal{O}_K)$ which only depends on the reduction of $(e_1, \ldots, e_r)$ modulo $\mathfrak{m}$, i.e., on the choice of a suitable basis of the fibre of $\mathcal{V}$ at $a$.

First of all, note that the derivation $d\colon K \rightarrow \hat{\Omega}^1_K$ induces an isomorphism $K/K_{\geq 0}\xrightarrow{\sim} \hat{\Omega}^1_{K, <-1}$. Therefore, given an untwisted connection $\nabla$ with a pole at $a$ of order at most $k+1$ and a basis $(e_1, \ldots, e_r)$ of $V$ such that $\bar{M}$ is diagonal, we obtain a unique element $IT(\nabla, (e_1, \ldots, e_r)) \in \tc \otimes (K_{\geq -k}/\mathcal{O}_K)$ mapping to $\bar{M} \in \tc \otimes_\C \hat{\Omega}^1_{K, <-1}$. It remains to show that $IT(\nabla, (e_1, \ldots, e_r))$ only depends on the reduction of $(e_1, \ldots, e_r)$ modulo $\mathfrak{m}$. In other words, if $g \in \mathrm{GL}_r(\Oo_K)$ projects to the identity in $\mathrm{GL}_r(\C)$ and the image $\bar{M}'$ of $M'=gMg^{-1}$ in $\mathfrak{g}\otimes_\C \hat{\Omega}^1_{K, <-1}$ is diagonal, we need to prove that $\bar{M}'=\bar{M}$.

To show this, let us fix an isomorphism $\Oo_K \simeq \C[[z]]$, yielding an isomorphisms $\hat{\Omega}^1_K  \simeq \C((z)) \mathrm{d}z$. This allows us to see $M$ and $M'$ as matrices with coefficients in $\frac{1}{z^{k+1}}\C[[z]]$, and $g$ as an element of $GL_r(\C[[z]])$ congruent to the identity modulo $(z)$. Multiplying $M$ and $M'$ by $z^{k+1}\mathrm{Id}$ turns them into matrices $N$ and $N'$ with $\C[[z]]$-coefficients. Let $\bar{N}$ and $\bar{N}'$ be the images of $N$ and $N'$ in $A=\C[[z]]/(z^k)$; we need to show that $\bar{N}=\bar{N}'$. This follows for instance from \cite[Lemma 8.1.2]{cfrw}, which proves a more general statement, valid for arbitrary $\mathfrak{g}$; let us give a simple argument in the case of $\mathfrak{gl}_r$. Let $(v_1, \ldots, v_r)$ be the canonical basis of $A^r$. By assumption, there exist $\lambda_1, \ldots, \lambda_r \in A$ such that $\bar{N}v_i =\lambda_i v_i$ for $1 \leq i \leq r$, hence $\bar{N}'(gv_i)=\lambda_i\cdot gv_i$. On the other hand, we are assuming that $\bar{N}' v_i = \gamma_i v_i$ for some $\gamma_1, \ldots, \gamma_r \in A$. Writing
\begin{equation*}
	g v_i=\sum_{j=1}^r a_{i, j} v_j \text{ with } a_{i, j} \in A
\end{equation*}
we obtain $\bar{N}'(gv_i)=\sum_{j=1}^ra_{i, j}\gamma_j v_j=\sum_{j=1}^ra_{i, j}\lambda_i v_j$. Finally, the vectors $g(v_i)$ and $v_i$ have the same image in $(A/(z))^r$, hence $a_{i, i}$ belongs to $A^\times$ and $\gamma_i=\lambda_i$.

\begin{rem}
	Let us spell out the above definition of irregular type for $k \geq 1$.

	Recall that $\T_{\Sigma, \sigma}^{\leq k}=\Oo_\Sigma(k\sigma)/\Oo_\Sigma$.
	If $(e_1, \ldots, e_r)$ is a basis of $\tc$ as a complex vector space, then every global section $Q$ of $\tc\otimes_\C \T_{\Sigma, \sigma}^{\leq k}$ can be written uniquely as $\sum_{j=1}^re_j \otimes Q^{(j)}$, with $Q^{(j)}$ a section of $\Oo_\Sigma(k\sigma)/\Oo_\Sigma$.
	Such a section is a meromorphic function defined locally (around each point of $\sigma(B)$, cf.~\cref{tails-ex}) up to holomorphic terms, and with pole along $\sigma(B)$ of order at most $k$.
	Therefore, irregular types can be thought of (locally) as meromorphic $\tc$-valued functions with pole along $\sigma(B)$ of order at most $k$, up to holomorphic terms.
\end{rem}

\subsubsection{}

If $B' \rightarrow B$ is a holomorphic map then, by adjunction and Lemma~\ref{funct-lem}(3), we obtain a map
\begin{equation*}
	\T_{\Sigma, \sigma}^{\leq k} \rightarrow \psi_*\psi^*\T_{\Sigma, \sigma}^{\leq k} \rightarrow \psi_*\T_{\Sigma', \sigma'}^{\leq k},
\end{equation*}
inducing a map on global sections $\Gamma(\Sigma, \tc\otimes_\C \T_{\Sigma, \sigma}^{\leq k})\rightarrow \Gamma(\Sigma', \tc\otimes_\C \T_{\Sigma', \sigma'}^{\leq k})$.
Similarly, for a map $B'' \rightarrow B'$ of manifolds over $B$ we have a map $\Gamma(\Sigma', \tc\otimes_\C \T_{\Sigma', \sigma'}^{\leq k})\rightarrow \Gamma(\Sigma'', \tc\otimes_\C \T_{\Sigma'', \sigma''}^{\leq k})$.
We obtain a contravariant functor $\IT^{\tc, \leq k}_{B, \sigma}$ from complex manifolds over $B$ to sets sending $B'\rightarrow B$ to $\Gamma(\Sigma', \tc\otimes_\C \T_{\Sigma', \sigma'}^{\leq k})$.

\begin{prop}
	\label{irrt-vectbun}

	Let $\pi \colon \Sigma\rightarrow B$ be a Riemann surface over $B$ of genus $g$, $\sigma \colon B \rightarrow \Sigma$ a section and $k \geq 0$ an integer.
	The functor $\IT^{\tc, \leq k}_{B, \sigma}$ is representable by a vector bundle (of rank $rk$) on $B$.
\end{prop}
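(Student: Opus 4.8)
The plan is to reduce the assertion to the classical fact that, for a locally free sheaf $\mathcal E$ of finite rank $N$ on a complex manifold $B$, the contravariant functor on complex manifolds over $B$ sending $\varphi \colon B' \to B$ to $\Gamma(B', \varphi^*\mathcal E)$ is representable by the total space of the associated holomorphic vector bundle $\lvert\mathcal E\rvert$, a rank-$N$ bundle over $B$ whose sheaf of holomorphic sections is $\mathcal E$. Indeed, a morphism $B' \to \lvert\mathcal E\rvert$ over $B$ is the same as a section over $B'$ of $\lvert\mathcal E\rvert \times_B B' = \lvert\varphi^*\mathcal E\rvert$, i.e.\ an element of $\Gamma(B', \varphi^*\mathcal E)$. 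Granting this, the content of the proposition is to identify $\IT^{\tc, \leq k}_{B, \sigma}$ with such a functor.

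First I would produce the relevant locally free sheaf. Set $\mathcal E = \tc \otimes_\C \T_{|B}^{\leq k} = \tc \otimes_\C \pi_*\T_{\Sigma, \sigma}^{\leq k}$. Since $\tc$ is an $r$-dimensional complex vector space, the functor $\tc \otimes_\C(-)$ is (after choosing a basis) a direct sum of $r$ copies, so Lemma~\ref{push-free} shows that $\mathcal E$ is locally free of rank $rk$ on $B$. This is the sheaf whose total space will represent $\IT^{\tc, \leq k}_{B, \sigma}$, accounting for the claimed rank.

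Next I would rewrite the functor. For $\varphi \colon B' \to B$, with base change $\pi' \colon \Sigma' \to B'$, section $\sigma'$ and induced map $\psi \colon \Sigma' \to \Sigma$, I compute
\[
\IT^{\tc, \leq k}_{B, \sigma}(B') = \Gamma(\Sigma', \tc \otimes_\C \T_{\Sigma', \sigma'}^{\leq k}) = \tc \otimes_\C \Gamma(B', \pi'_*\T_{\Sigma', \sigma'}^{\leq k}) = \tc \otimes_\C \Gamma(B', \T_{|B'}^{\leq k}),
\]
where the middle equality uses that global sections and pushforward along $\pi'$ both commute with tensoring by the finite-dimensional constant vector space $\tc$. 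Finally, Lemma~\ref{bc-onB} gives a natural isomorphism $\T_{|B'}^{\leq k} \cong \varphi^*\T_{|B}^{\leq k}$, and since pullback commutes with $\tc \otimes_\C(-)$ this yields $\IT^{\tc, \leq k}_{B, \sigma}(B') \cong \Gamma(B', \varphi^*\mathcal E)$.

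It remains to check that these identifications are natural in $B'$: for a map $B'' \to B'$ of manifolds over $B$, the transition maps defining $\IT^{\tc, \leq k}_{B, \sigma}$ (built from the adjunction unit $\Id \to \psi_*\psi^*$ together with Lemma~\ref{funct-lem}(3)) must match the ordinary pullback-on-sections maps for $\mathcal E$ under the base-change isomorphisms of Lemma~\ref{bc-onB}. Once naturality is in hand, the representability fact above identifies $\IT^{\tc, \leq k}_{B, \sigma}$ with the total space of $\lvert\mathcal E\rvert$, a vector bundle of rank $rk$, as required. I expect this final naturality verification to be the main obstacle: it is the only step requiring genuine diagram-chasing, tracking the adjunction units and the isomorphisms of Lemmas~\ref{funct-lem} and~\ref{bc-onB} through a composite of base changes, whereas the remaining steps are either direct applications of the preceding lemmas or the standard passage from a locally free sheaf to its geometric vector bundle.
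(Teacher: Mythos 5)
Your proposal is correct and follows essentially the same route as the paper's proof: identify $\IT^{\tc, \leq k}_{B, \sigma}$ with the functor of sections of the locally free sheaf $\tc\otimes_\C \T_{|B}^{\leq k}$ (locally free of rank $rk$ by Lemma~\ref{push-free}, with base change handled by Lemma~\ref{bc-onB}), then invoke the standard representability of this functor by the associated total space, which the paper records in \cref{funp-vectbun}. The naturality check you flag as the main obstacle is left implicit in the paper, but it is routine and does not constitute a difference in method.
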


\begin{proof}
	For $\varphi \colon B' \rightarrow B$, we have
	\begin{equation*}
		\Gamma(\Sigma', \tc\otimes_\C \T_{\Sigma', \sigma'}^{\leq k})=\Gamma(B', \tc\otimes_\C \T_{|B'}^{\leq k})=\Gamma(B', \tc\otimes_\C \varphi^*\T_{|B}^{\leq k}),
	\end{equation*}
	where the last identification follows from Lemma~\ref{bc-onB}.
	Hence, the functor $\IT^{\tc, \leq k}_{B, \sigma}$ is the functor of global sections of $\tc\otimes_\C \T_{|B}^{\leq k}$, which is a locally free sheaf of rank $rk$ over $B$ by Lemma~\ref{push-free}.
	The vector bundle over $B$ attached to this locally free sheaf represents $\IT^{\tc, \leq k}_{B, \sigma}$ (see~\cref{funp-vectbun}).
\end{proof}

\subsubsection{}

We call the vector bundle of Proposition~\ref{irrt-vectbun} the bundle of irregular types on $B$ with pole at $\sigma$ of order at most $k$, and we still denote it by $\IT^{\tc, \leq k}_{B, \sigma}$.
We denote by $\T^{\leq k}_{B, \sigma}$ the vector bundle of rank $k$ over $B$ representing the functor sending $B'\rightarrow B$ to $\Gamma(\Sigma', \T_{\Sigma', \sigma'}^{\leq k})$.

If $\underline{\sigma}=(\sigma_1, \ldots, \sigma_m)$ is an $m$-tuple of non-intersecting sections of $\Sigma \rightarrow B$ and $\underline{k}=(k_1, \ldots, k_m) \in \Z_{\geq 0}^m$, we denote by $\IT^{\tc, \leq \underline{k}}_{B, \underline{\sigma}}$ the fibre product of the bundles $\IT^{\tc, \leq k_i}_{B, \sigma_i}$ over $B$, for $1 \leq i \leq m$.

\subsubsection{The root stratification}
\label{rootstrat}

Given $\pi \colon \Sigma \rightarrow B$ with a section $\sigma$, and an integer $k \geq 0$, we will stratify the space $\IT^{\tc, \leq k}_{B, \sigma}$ using the following inputs:

\begin{enumerate}
	\item for $0 \leq j \leq k$ we have natural injections $\T_{\Sigma', \sigma'}^{\leq j}\rightarrow \T_{\Sigma', \sigma'}^{\leq k}$ for every $B' \rightarrow B$, hence we obtain a morphism $\T^{\leq j}_{B, \sigma}\rightarrow \T^{\leq k}_{B, \sigma}$ identifying the source with a vector sub-bundle of the target;

	\item similarly, we have a morphism $\IT^{\tc, \leq j}_{B, \sigma}\rightarrow \IT^{\tc, \leq k}_{B, \sigma}$;

	\item a root $\alpha \in \Phi$ induces natural surjections of sheaves $\tc\otimes \T_{\Sigma', \sigma'}^{\leq k}\rightarrow \T_{\Sigma', \sigma'}^{\leq k}$ for every $B' \rightarrow B$; this yields a submersion $\IT^{\tc, \leq k}_{B, \sigma}\rightarrow \T^{\leq k}_{B, \sigma}$ of vector bundles over $B$.
\end{enumerate}

Given a collection of integers $\bd=(d_\alpha)_{\alpha \in \Phi}$ with $0 \leq d_\alpha \leq k$ for every $\alpha \in \Phi$, we denote by $\IT^{\tc, \leq k, \leq \bd}_{B, \sigma}$ the intersection of the fibre products $\IT^{\tc, \leq k}_{B, \sigma}\times_{\T^{\leq k}_{B, \sigma}} \T^{\leq d_\alpha}_{B, \sigma} \subset \IT^{\tc, \leq k}_{B, \sigma}$ as $\alpha \in \Phi$ varies.
This is a closed submanifold of $ \IT^{\tc, \leq k}_{B, \sigma}$ (we will justify in~\cref{locstr-str} that it is a manifold); by construction, it is the moduli space of irregular types with pole at $\sigma$ of order bounded by $k$, and such that after evaluation at each $\alpha \in \Phi$ the pole order is bounded by $d_\alpha$.

We write $\bd'=(d'_\alpha)_{\alpha \in \Phi} < \bd=(d_\alpha)_{\alpha \in \Phi}$ if $d'_{\alpha}\leq d_\alpha$ for every $\alpha \in \Phi$, and at least one of the inequalities is strict.
We define
\begin{equation*}
	\IT^{\tc, \leq k, \bd}_{B, \sigma}= \IT^{\tc, \leq k, \leq \bd}_{B, \sigma} \smallsetminus \cup_{\bd'<\bd} \IT^{\tc, \leq k, \leq \bd'}_{B, \sigma} \subset \IT^{\tc, \leq k}_{B, \sigma}.
\end{equation*}
By construction, $\IT^{\tc, \leq k, \bd}_{B, \sigma}$ is an open submanifold of $\IT^{\tc, \leq k, \leq \bd}_{B, \sigma}$, and a locally closed submanifold of $\IT^{\tc, \leq k}_{B, \sigma}$.
It parametrises irregular types with pole at $\sigma$ of order at most $k$, and such that after evaluation at $\alpha \in \Phi$ the pole order is (everywhere) equal to $d_\alpha$.

\subsubsection{Local structure of root strata}
\label{locstr-str}

Let us describe $\IT^{\tc, \leq k, \bd}_{B, \sigma}\rightarrow B$ locally on $B$.

Given $b \in B$, we choose small enough opens $U \ni b$ and $V \ni \sigma(b)$ with charts as in~\cref{locsect-anal}.
Restricted to $U$, the sheaf $\T^{\leq k}_{|B}$ is free over $\Oo_U$, with basis $(z_{n+1}^{-1}, \ldots, z_{n+1}^{-k})$; let us use this basis to identify $\T^{\leq k}_{|B}$ restricted to $U$ with $\Oo_U^k$.
Concretely, the point is that a meromorphic function on $V$ with pole of order at most $k$ along $\sigma(B)$ can be written uniquely, up to holomorphic terms, as a sum $f_1z_{n+1}^{-1}+ \ldots + f_kz_{n+1}^{-k}$, where $f_1, \ldots, f_k$ are holomorphic functions of the variables $z_1, \ldots, z_n$, which are identified with functions on $U$.

Thanks to the above identification, we see that the functor of points of the pullback of $\IT^{\tc, \leq k}_{B, \sigma}$ to $U$ is representable by $\tc^k \times U$.
The pullback of $\IT^{\tc, \leq k, \leq \bd}_{B, \sigma}$ to $U$ is representable by the intersection of the manifolds
\begin{equation*}
	\left(\prod_{1 \leq i \leq d_\alpha}\tc \times \prod_{d_\alpha < i \leq k}\ker(\alpha)\right)\times U, \qquad \alpha \in \Phi.
\end{equation*}

In other words, we can write the pullback to $U$ of $\IT^{\tc, \leq k, \leq \bd}_{B, \sigma}$ as the product of $U$ and
\begin{equation*}
	\prod_{i=1}^k\left(\cap_{d_\alpha <i} \ker(\alpha)\right) \subset \tc^k.
\end{equation*}

Each of the spaces in the above product is an intersection of hyperplanes through the origin in $\tc$, cut out by the equations $\alpha=0$ for $\alpha$ such that $d_\alpha<i$.
Hence $\IT^{\tc, \leq k, \leq \bd}_{B, \sigma}$ is a manifold, and the same is true for the open $\IT^{\tc, \leq k, \bd}_{B, \sigma}\subset \IT^{\tc, \leq k, \leq \bd}_{B, \sigma}$ (which, explicitly, is locally a product of intersections of hyperplane complements).

\subsection{Stacks of wild Riemann surfaces}

In this section we will define stacks of (labelled, untwisted) wild Riemann surfaces, of fixed genus, number of marked points, and maximal pole orders of the irregular types.
We start by introducing (families of) wild Riemann surfaces.
Later on we will ask, as in~\cite[Definition 10.1]{boa14}, such families to be \emph{admissible}; this will lead to a stratification of the stack of wild Riemann surfaces, mirroring the stratification on $\IT^{\tc, \leq k}_{B, \sigma}$ described above.

Recall that we fixed an integer $m \geq 1$ and an $m$-tuple $\up=(p_1, \ldots, p_m) \in \Z_{\geq 0}^m$.

\begin{defin}
	\label{def-wrs}

	A wild Riemann surface over a complex manifold $B$ of genus $g$, with $m$ marked points and pole orders bounded by $\up$, is a triple $(\pi \colon \Sigma \rightarrow B, \ua, \uQ)$ consisting of:
	\begin{enumerate}
		\item a Riemann surface $\pi \colon \Sigma \rightarrow B$ of genus $g$ in the sense of Definition~\ref{def-an-curve};

		\item an $m$-tuple $\ua=(a_1, \ldots, a_m)$ of mutually non-intersecting holomorphic sections $a_i \colon B \rightarrow \Sigma$ of $\pi$;

		\item an $m$-tuple $\uQ=(Q_1, \ldots, Q_m)$ of irregular types $Q_i \in \Gamma(\Sigma, \tc\otimes_\C \T_{\Sigma, a_i}^{\leq p_i})$.
	\end{enumerate}
\end{defin}

In particular, note that we have $Q_i=0$ whenever $p_i=0$.

\subsubsection{Definition of $\wma$}

For every complex manifold $B$, the groupoid $\wma(B)$ is the category whose objects are wild Riemann surfaces over $B$ of genus $g$, with $m$ marked points and pole orders bounded by $\up$.
An isomorphism from $(\pi_1 \colon \Sigma_1 \rightarrow B, \ua_1, \uQ_1)$ to $(\pi_2 \colon \Sigma_2 \rightarrow B, \ua_2, \uQ_2)$ is an isomorphism  $\psi \colon \Sigma_1 \rightarrow \Sigma_2$ such that $\pi_2\circ \psi=\pi_1$, the pullback of $\ua_2$ is $\ua_1$ (in other words, $\psi \circ a_{1, i}=a_{2, i}$ for $1 \leq i \leq m$) and the pullback of $\uQ_2$ is $\uQ_1$.

As above we see that any holomorphic map $B'\rightarrow B$ induces a functor from $\wma(B)$ to $\wma(B')$.
Moreover, forgetting irregular types we get functorial maps $\wma(B)\rightarrow \Mgma(B)$ for every manifold $B$, where $\Mgma(B)$ is the groupoid of genus $g$ Riemann surfaces over $B$ with $m$ mutually disjoint sections.

\subsubsection{Definition of $\wmda$}\label{subsec:defwmda}

Let us now define admissible families of wild Riemann surfaces over a manifold $B$.
Fix integers $0 \leq d_{\alpha, i} \leq p_i$ for each $\alpha \in \Phi$ and $1 \leq i \leq m$, and let $\ud=(\bd_1, \ldots, \bd_m)$, where $\bd_i=(d_{\alpha, i})_{\alpha \in \Phi}$.

If $(\pi \colon \Sigma \rightarrow B, \ua, \uQ)$ is a wild Riemann surface as above, then for every $\alpha \in \Phi$ and $1 \leq i \leq m$ we can look at $\alpha \circ Q_i\in \Gamma(\Sigma, \T_{\Sigma, a_i}^{\leq p_i})$ - usually called an exponential factor.
Recall that we have injections of sheaves $\T_{\Sigma, a_i}^{\leq d_{\alpha, i}}\rightarrow \T_{\Sigma, a_i}^{\leq p_i}$ (cf.~\cref{rootstrat}).
We will use the following terminology:

\begin{itemize}
	\item we say that $\alpha \circ Q_i$ has order at most $d_{\alpha, i}$ if it belongs to $\Gamma(\Sigma, \T_{\Sigma, a_i}^{\leq d_{\alpha, i}})$;

	\item we say that $\alpha \circ Q_i$ has order $d_{\alpha, i}$ if it belongs to $\Gamma(\Sigma, \T_{\Sigma, a_i}^{\leq d_{\alpha, i}})$ and, for every $b \in B$, its pullback to the fibre $\Sigma_b=\pi^{-1}(b)$ does not belong to $\Gamma(\Sigma_b, \T_{\Sigma_b, a_{i}(b)}^{\leq d_{\alpha, i}-1})$;

	\item we say that $Q_i$ has root order at most $\bd_i$ (resp. root order $\bd_i$) if $\alpha \circ Q_i$ has order at most (resp. equal to) $d_{\alpha, i}$ for every $\alpha \in \Phi$;

	\item we say that $\uQ$ has root order at most $\ud$ (resp. root order $\ud$) if $Q_i$ has root order at most $\bd_i$ (resp. root order $\bd_i$) for all $1\leq i \leq m$;

	\item we say that $(\pi \colon \Sigma \rightarrow B, \ua, \uQ)$ is admissible of root order $\ud$ if $\uQ$ has root order $\ud$.
\end{itemize}

The groupoid of admissible wild Riemann surfaces (over $B$, of genus $g$, with $m$ marked points and pole orders bounded by $\up$) of root order $\ud$ is denoted by $\wmda(B)$.

\begin{rem}
	Let us rephrase more concretely the condition that $\alpha \circ Q_i$ has order $d_{\alpha, i}$.

	For every $b\in B$, pulling back $\alpha \circ Q_i\in \Gamma(\Sigma, \T_{\Sigma, a_i}^{\leq p_i})$ to the fibre $\Sigma_b$ we obtain an element $(\alpha \circ Q_i)_b \in \Gamma(\Sigma_b, \T_{\Sigma_b, a_{i}(b)}^{\leq p_i})\simeq z^{-p_i}\mathcal{O}_{\Sigma_b, a_{i}(b)}/\mathcal{O}_{\Sigma_b, a_{i}(b)}$, where $z$ is a local coordinate on $\Sigma_b$ around $a_{i}(b)$. Then $\alpha \circ Q_i$ has order $d_{\alpha, i}$ if and only if for every $b \in B$ the pole order of $(\alpha \circ Q_i)_b$ is $d_{\alpha, i}$.
\end{rem}

\begin{prop}
	\label{wmstack}

	The assignment $\wma \colon \man \rightarrow \mathrm{Groupoids}$ sending $B$ to $\wma(B)$ is a stack.
\end{prop}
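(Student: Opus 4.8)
The plan is to verify the two descent axioms for the pseudofunctor $\wma$: that isomorphisms form a sheaf, and that descent data for objects are effective. Throughout I would fix a cover $\{\varphi_i\colon B_i\to B\}$ in the relevant topology on $\man$ (open covers, in the conventions of \cref{sec:appendix}), writing $B_{ij}=B_i\times_B B_j$ and so on. The guiding idea is that a wild Riemann surface is a pointed Riemann surface together with the extra datum of an irregular type, the latter being a global section of the sheaf $\tc\otimes_\C\T_{\Sigma,a_i}^{\leq p_i}$. Since $\Mgma$ is already an analytic stack (classical), and sections of a sheaf satisfy descent, both axioms should reduce to these two inputs, glued along the base-change isomorphism~\eqref{eq:bc_onB}. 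The pseudofunctoriality of the pullback making $\wma$ a category fibered in groupoids follows from \cref{funct-lem,bc-onB} and is routine.

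First I would treat morphisms. Fix wild Riemann surfaces $W_1=(\pi_1,\ua_1,\uQ_1)$ and $W_2=(\pi_2,\ua_2,\uQ_2)$ over $B$ and suppose we are given isomorphisms $\psi_i\colon W_1|_{B_i}\to W_2|_{B_i}$ agreeing on the overlaps $B_{ij}$. Forgetting irregular types, the $\psi_i$ are isomorphisms in $\Mgma(B_i)$ compatible on overlaps; since $\Mgma$ is a stack they glue to a unique isomorphism $\psi\colon(\Sigma_1,\ua_1)\to(\Sigma_2,\ua_2)$ of pointed Riemann surfaces over $B$ with $\psi|_{B_i}=\psi_i$. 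It remains to see that $\psi$ respects the irregular types. By \cref{funct-lem}(3), and because $\psi\circ a_{1,i}=a_{2,i}$, we get a functorial identification $\psi^*\T_{\Sigma_2,a_{2,i}}^{\leq p_i}\simeq\T_{\Sigma_1,a_{1,i}}^{\leq p_i}$, so $\psi^*\uQ_2-\uQ_1$ is a global section of the bundle $\IT^{\tc,\leq\up}_{B,\ua_1}$ vanishing over each $B_i$. As sections of this bundle form a sheaf on $B$ (by \cref{irrt-vectbun}), the difference vanishes and $\psi$ is an isomorphism in $\wma(B)$; uniqueness is inherited from $\Mgma$. Hence the $\mathrm{Isom}$-presheaf of $\wma$ is a sheaf.

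Next I would treat effective descent. Given wild Riemann surfaces $W_i$ over $B_i$ with isomorphisms $\phi_{ij}\colon W_i|_{B_{ij}}\to W_j|_{B_{ij}}$ satisfying the cocycle condition, forgetting irregular types yields descent data for the pointed Riemann surfaces $(\Sigma_i,\ua_i)$. Since $\Mgma$ is a stack, these descend to a pointed Riemann surface $(\pi\colon\Sigma\to B,\ua)$ with isomorphisms $\theta_i\colon(\Sigma,\ua)|_{B_i}\xrightarrow{\sim}(\Sigma_i,\ua_i)$ satisfying $\phi_{ij}=\theta_j|_{B_{ij}}\circ(\theta_i|_{B_{ij}})^{-1}$. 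I would then transport each $\uQ_i$ through $\theta_i$ to a section $\uQ_i'=\theta_i^*\uQ_i$ of $\IT^{\tc,\leq\up}_{B,\ua}$ over $B_i$ (using \cref{funct-lem}(3) for the pullback along $\theta_i$ and \cref{bc-onB} to identify it with the restriction of the bundle from $B$). The condition that $\phi_{ij}$ respects irregular types, namely $\phi_{ij}^*\uQ_j=\uQ_i$, then translates into $\uQ_i'|_{B_{ij}}=\theta_i^*\phi_{ij}^*\uQ_j=\theta_j^*\uQ_j=\uQ_j'|_{B_{ij}}$. Since sections of the vector bundle $\IT^{\tc,\leq\up}_{B,\ua}$ form a sheaf on $B$, the $\uQ_i'$ glue to a single global section $\uQ$, and $(\pi\colon\Sigma\to B,\ua,\uQ)$ is the required object of $\wma(B)$ restricting to the $W_i$ compatibly with the $\phi_{ij}$.

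The step I expect to be the main obstacle is the bookkeeping of compatibilities in the descent of objects: one must check that the sheaf-theoretic pullback of the tails sheaves along $\theta_i$ (governed by \cref{funct-lem} and the base-change isomorphism of \cref{bc-onB}) is genuinely compatible with the gluing isomorphisms produced by $\Mgma$-descent, so that the local sections $\uQ_i'$ all live in the fibres of one and the same bundle $\IT^{\tc,\leq\up}_{B,\ua}$ over $B$ and the cocycle condition becomes the overlap equality used above. Once this identification is secured the argument is formal, resting on the fact that sections of a sheaf satisfy descent; the boundary case $p_i=0$, where the irregular type is forced to vanish, is a trivial instance of the same reasoning.
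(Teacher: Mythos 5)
Your proposal is correct and takes essentially the same approach as the paper: both descent axioms are reduced to the fact that $\Mgma$ is a stack together with the fact that irregular types are sections of a sheaf on the base (the paper phrases this via $\pi_*\T_{\Sigma, a_i}^{\leq p_i}$, you via the vector bundle $\IT^{\tc,\leq\up}_{B,\ua}$ of Proposition~\ref{irrt-vectbun}, which represent the same data), hence glue. The paper's proof is much terser — in particular it leaves implicit the transport of the $\uQ_j$ through the gluing isomorphisms coming from $\Mgma$-descent, which you carry out explicitly — but the substance is identical.
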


\begin{proof}
	Let us verify that the properties defining a stack (cf.~\cref{stacks}) are satisfied.

	\begin{description}
		\item[Objects glue] Let $(B_j)_{j \in J}$ be an open cover of a manifold $B$.
		      Suppose we are given objects $(\pi_j \colon \Sigma_j \rightarrow B_j, \ua_j, \uQ_j)$ of $\wma(B_j)$ for each $j$, together with isomorphisms of their restrictions to $B_j \cap B_{j'}$ satisfying the cocycle condition on triple intersections.
		      We need to show that there is $(\pi \colon \Sigma \rightarrow B, \ua, \uQ)$ pulling back to $(\pi_j \colon \Sigma_j \rightarrow B_j, \ua_j, \uQ_j)$ over each $B_j$.
		      Since $\Mgma$ is a stack, we do get $(\pi \colon \Sigma \rightarrow B, \ua)$ with the desired property.
		      Now, each $\uQ_j$ is a collection of sections over $B_j$ of $\tc\otimes_\C(\pi_*\T_{\Sigma, a_{i}}^{\leq p_i}),$ for $1 \leq i \leq m$. Such sections glue to a section on $B$ because $\pi_*\T_{\Sigma, a_{i}}^{\leq p_i}$ is a sheaf on $B$.

		\item[Isomorphisms glue] Isomorphisms of objects in $\wma(B)$ are by definition isomorphisms of Riemann surfaces over $B$ (respecting the extra structures), which glue. \qedhere
	\end{description}
\end{proof}

We can now state the main result of this section. See Definition \ref{def:anstack} for the definition of analytic stack.

\begin{thm}
	\label{mainthm-an}

	Given integers $g\geq 0, m\geq 1$, and tuples
	\begin{equation*}
		\up=(p_1, \ldots, p_m) \in \Z_{\geq 0}^m, \quad \ud=(d_{\alpha, i})_{\alpha \in \Phi, 1\leq i \leq m}, \qquad \text{ with } 0 \leq d_{\alpha,i}\leq p_i,                                                                                                                                                  \end{equation*}
	the following assertions hold true.

	\begin{enumerate}
		\item The stack $\wma$ is analytic.

		\item The map $\wma\rightarrow \Mgma$ is representable by vector bundles.
		      More precisely, let $B$ be a complex manifold and $B\rightarrow \Mgma$ a map corresponding to an $m$-pointed genus $g$ Riemann surface $(\Sigma\rightarrow B, \ua)$, where $\ua=(a_1, \ldots, a_m)$.
		      The fibre product $\wma \times_{\Mgma}B$ is isomorphic to $\IT^{\tc, \leq \up}_{B, \ua}$.

		\item The assignment  $\wmda \colon \man \rightarrow \mathrm{Groupoids}$ sending $B$ to $\wmda(B)$ is an analytic stack.

		\item The map $\wmda\rightarrow \Mgma$ is representable by manifolds which are locally products of hyperplane complements in affine spaces (in particular, it is a submersion).

		\item The fibre product of  $\wmda$ with a point over $\Mgma$ is an $m$-fold product of products of hyperplane complements as in~\cite[Definition 2.3]{drt22}.
	\end{enumerate}
\end{thm}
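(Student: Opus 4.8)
The plan is to reduce all five assertions to the representability statement~(2), from which~(1) follows formally and whose stratified refinement yields~(3)--(5) via the fibrewise analysis already carried out in \cref{rootstrat,locstr-str}. For~(2), fix $\varphi \colon B' \to B$ and a map $B \to \Mgma$ classifying $(\Sigma \to B, \ua)$. An object of the $2$-fibre product $\wma \times_{\Mgma} B$ over $B'$ is a pair $\bigl((\Sigma' \to B', \ua', \uQ'), \iota\bigr)$ consisting of a wild Riemann surface together with an isomorphism of pointed curves $\iota \colon (\Sigma', \ua') \xrightarrow{\sim} \varphi^*(\Sigma, \ua)$. First I would check the rigidity of such pairs: an automorphism is an isomorphism $\psi$ of wild Riemann surfaces whose underlying map satisfies $\iota \circ \psi = \iota$, forcing $\psi = \Id$. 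Hence the fibre product is equivalent to a set-valued functor. Transporting $\uQ'$ along $\iota$ identifies this functor with the one sending $\varphi$ to $\prod_{i=1}^m \Gamma(\Sigma', \tc \otimes_\C \T_{\Sigma', a_i'}^{\leq p_i})$, naturality in $\varphi$ being exactly Lemma~\ref{funct-lem}(3) together with Lemma~\ref{bc-onB}. By Proposition~\ref{irrt-vectbun} and the fibre-product definition of $\IT^{\tc, \leq \up}_{B, \ua}$, this functor is represented by the vector bundle $\IT^{\tc, \leq \up}_{B, \ua}$; this proves~(2) and, since vector bundles are analytic, shows that $\wma \to \Mgma$ is representable.

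Assertion~(1) is then formal: $\wma$ is a stack by Proposition~\ref{wmstack}, and a stack carrying a representable morphism to the analytic stack $\Mgma$ is itself analytic (cf.~\cref{sec:appendix}). For~(3)--(5) I would run the same argument for $\wmda$. The condition that $\uQ$ have root order $\ud$ is, by \cref{subsec:defwmda}, a fibrewise condition on the exponential factors $\alpha \circ Q_i$; it is therefore stable under isomorphism and local on $B$, so $\wmda$ is a full substack of $\wma$ and satisfies descent by the argument of Proposition~\ref{wmstack} (a glued object again satisfies the local condition, since each piece does).

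Repeating the fibre-product analysis identifies $\wmda \times_{\Mgma} B$ with the locus in $\IT^{\tc, \leq \up}_{B, \ua}$ where, for every $\alpha \in \Phi$ and every $i$, the evaluation $\alpha \circ Q_i$ has pole order exactly $d_{\alpha, i}$ along $a_i$. The one point requiring care is to match this sheaf-theoretic order condition with the stratum of \cref{rootstrat}: the root evaluations there are the surjections $\tc \otimes \T_{\Sigma', \sigma'}^{\leq p_i} \to \T_{\Sigma', \sigma'}^{\leq p_i}$ induced by $\alpha$, and unwinding the fibrewise reformulation of \cref{subsec:defwmda} shows that the locus above is precisely the fibre product over $B$ of the open strata $\IT^{\tc, \leq p_i, \bd_i}_{B, a_i}$.

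Finally,~(4) and~(5) are read off from \cref{locstr-str}, where the stratum $\IT^{\tc, \leq p_i, \bd_i}_{B, a_i}$ is shown to be, locally over $U \subset B$, the product of $U$ with an intersection of hyperplane complements in a power of $\tc$. Taking the $m$-fold fibre product over $B$ exhibits $\wmda \to \Mgma$ as representable by manifolds that are locally products of $U$ with $m$-fold products of hyperplane complements, giving~(4) and the submersion claim; specialising $B$ to a point recovers the spaces of \cite[Definition 2.3]{drt22}, which is~(5). Assertion~(3) then follows as~(1) did. The main obstacle, as I see it, is the bookkeeping in these fibre-product identifications: proving that the candidate equivalences are natural in $B'$ (where Lemma~\ref{funct-lem} and Lemma~\ref{bc-onB} do the real work) and confirming the automorphism rigidity that makes each fibre product a space rather than a genuine stack.
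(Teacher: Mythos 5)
Your proposal is correct and takes essentially the same route as the paper: prove (2) first by rigidifying the $2$-fibre product (once the isomorphism to the pulled-back pointed curve is fixed, objects have no automorphisms) and identifying the resulting set-valued functor with the functor of irregular types represented, via Proposition~\ref{irrt-vectbun} and Lemmas~\ref{funct-lem} and~\ref{bc-onB}, by $\IT^{\tc, \leq \up}_{B, \ua}$; then deduce (1) by pulling the atlas of $\Mgma$ back along the representable map, and obtain (3)--(5) by the same fibre-product analysis applied to the root strata together with the local description of \cref{locstr-str}. This matches the paper's proof step for step, including the order of the deductions.
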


\begin{proof}
	We know from the above proposition that $\wma$ is a stack, which implies that the same is true for $\wmda$.

	Let us now prove $(2)$.
	Let $B\rightarrow \Mgma$ be a manifold as in the statement, and $B'$ another manifold.
	By definition (cf.~\cref{atlas}), the groupoid $\wma \times_{\Mgma}B(B')$ has the following description:

	\begin{itemize}
		\item objects are triples $(\varphi \colon B'\rightarrow B, (\pi' \colon \Sigma'\rightarrow B', \ua', \uQ'), \tau)$, where $\tau \colon \Sigma\times_{B, \varphi} B'\rightarrow \Sigma'$ is an isomorphism commuting with the maps to $B'$ and respecting marked points;

		\item isomorphisms between $(\varphi \colon B'\rightarrow B, (\pi'_1 \colon \Sigma'_1\rightarrow B', \ua'_1, \uQ'_1), \tau_1)$ and $(\varphi \colon B'\rightarrow B, (\pi_2' \colon \Sigma'_2\rightarrow B', \ua'_2, \uQ'_2), \tau_2)$ are isomorphisms $\psi \colon \Sigma'_1\rightarrow \Sigma'_2$ commuting with the structure maps to $B'$, respecting marked points and irregular types, and such that $\psi \circ \tau_1=\tau_2$.
	\end{itemize}

	Therefore, we see that every object of $\wma \times_{\Mgma}B(B')$ is isomorphic to one of the form $(\varphi \colon B'\rightarrow B, (\Sigma\times_B B'\rightarrow B', \varphi^*(\ua), \uQ'), \mathrm{Id})$, where $\uQ'$ is a collection of irregular types on $(\Sigma\times_B B', \varphi^*(\ua))$; furthermore, objects of this form have no automorphisms.
	In other words, the groupoid $\wma \times_{\Mgma}B(B')$ is equivalent to $\IT^{\tc, \leq \up}_{B, \ua}(B')$.

	Let us now deduce $(1)$, i.e. that $\wma$ is an analytic stack.
	Choose a surjective submersion $U\rightarrow \Mgma$ from a complex manifold $U$.
	Then the fibre product $V=U\times_{\Mgma} \wma$ is a manifold, and the map $V \rightarrow \wma$ is a surjective submersion, hence $\wma$ is an analytic stack.
	The same argument shows that point $(3)$ follows from point $(4)$.

	To prove $(4)$ and $(5)$, take a map $B\rightarrow \Mgma$ corresponding to an $m$-pointed genus $g$ Riemann surface $(\Sigma\rightarrow B, \ua)$.
	The fibre product $B \times_{\Mgma}\wmda$ is representable by the product (over $B$) of the spaces representing $\IT^{\tc, \leq p_i, \bd_i}_{B, a_i}$, for $1 \leq i \leq m$.
	We have seen in~\cref{locstr-str} that each $\IT^{\tc, \leq p_i, \leq \bd_i}_{B, a_i}$ is representable by a manifold over $B$ which, locally on $B$, is an intersection of the manifolds
	\begin{equation}
		\label{irrleqdi}
		B \times \left(\prod_{1 \leq j \leq d_{\alpha, i}}\tc \times \prod_{d_{\alpha, i} < j \leq p_i} \mathrm{ker}(\alpha)\right), \qquad \alpha \in \Phi.
	\end{equation}
	Furthermore, the functor $\IT^{\tc, \leq p_i, \bd_i}_{B, a_i}\colon \man \rightarrow \mathrm{Sets}$ parameterising irregular types at $a_i$ with root order $\bd_i$ is representable by a manifold which, locally on $B$, is open inside the intersection of the manifolds in \eqref{irrleqdi} as $\alpha$ varies.
	Precisely, locally on $B$, the manifold representing $\IT^{\tc, \leq p_i, \bd_i}_{B, a_i}$ is of the form $B\times \prod_{j=1}^{p_i} C_j$, where, for $1 \leq j \leq p_i$,
	\begin{equation*}
		C_j= \bigcap_{d_{\alpha, i}<j} \mathrm{ker}(\alpha) \cap \bigcap_{d_{\alpha, i}=j}(\tc \smallsetminus \mathrm{ker}(\alpha))\subset \tc.
	\end{equation*}
	The above manifold coincides with the one appearing in~\cite[Proposition 2.1]{drt22}, hence the proof is complete.
\end{proof}

\begin{corol}
	\label{repn-mgm}

	If $\Mgma$ is (representable by) a manifold then $\wmda$ is (representable by) a manifold as well, and the map $\wmda \rightarrow \Mgma$ is a holomorphic submersion.
\end{corol}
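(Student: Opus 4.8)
The plan is to derive the statement directly from Theorem~\ref{mainthm-an}, specifically from part~(4), using only the formal behaviour of representability under base change; no new geometric input is needed. First I would unwind what it means for $\Mgma$ to be representable by a manifold: there is a complex manifold $M$ and a map $M \rightarrow \Mgma$ which is an isomorphism of stacks (equivalently, $\Mgma$ is equivalent to the stack associated to $M$). Concretely this map corresponds to a universal $m$-pointed genus $g$ Riemann surface $(\Sigma \rightarrow M, \ua)$ over $M$.

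Next I would apply part~(4) of Theorem~\ref{mainthm-an} to this map $M \rightarrow \Mgma$. By the definition of a morphism representable by manifolds, the fibre product $\wmda \times_{\Mgma} M$ is representable by a complex manifold, and the projection $\wmda \times_{\Mgma} M \rightarrow M$ is a holomorphic submersion (in fact locally a product of hyperplane complements in affine spaces). The final step is to observe that, since $M \rightarrow \Mgma$ is an isomorphism, base change along it is an equivalence, so the other projection $\wmda \times_{\Mgma} M \rightarrow \wmda$ is an isomorphism of stacks. This identifies $\wmda$ with the manifold $\wmda \times_{\Mgma} M$, and identifies the map $\wmda \rightarrow \Mgma$ with the submersion $\wmda \times_{\Mgma} M \rightarrow M$ under the isomorphisms just described; hence $\wmda$ is representable by a manifold and $\wmda \rightarrow \Mgma$ is a holomorphic submersion.

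There is no genuine obstacle here: the corollary is a formal consequence of representability under base change together with the fact that base change along an isomorphism is an equivalence. The only point that requires mild care is verifying that the manifold structure and the submersion property transported across the isomorphism $M \cong \Mgma$ are compatible with the intended structures on $\wmda$ and $\Mgma$; this is routine within the formalism of analytic stacks recalled in the appendix.
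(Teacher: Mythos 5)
Your proposal is correct and is exactly the intended argument: the paper states this corollary without proof precisely because it follows formally from Theorem~\ref{mainthm-an}(4) by base change along the representing manifold $M \rightarrow \Mgma$, which is an equivalence, identifying $\wmda$ with the manifold $\wmda \times_{\Mgma} M$ and the map to $\Mgma$ with the resulting holomorphic submersion.
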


\begin{rem}
	For $g=0$, fibre bundles of irregular types with poles of bounded order at distinct points of $\mathbf{P}^1$, and with given pole orders after evaluation at each root, are constructed in a more direct way in \cite[Proposition 4.1, Definition 4.2]{yam19}. The stacks $\zwmda$ are isomorphic to quotients of the spaces in \cite[Definition 4.2]{yam19} by the action of the group of automorphisms of $\mathbf{P}^1$ fixing infinity (cf. \cref{g0m1} for the case $m=1$).
\end{rem}

\section{Wild mapping class groups}

In this section we define and study the fundamental groups of the stacks of admissible (families of) wild Riemann surfaces.

\subsection{Definition and basic properties}

Fix $g, m, \up$ and $\ud$ as in Theorem~\ref{mainthm-an}.
By Theorem~\ref{mainthm-an} we know that $\wmda$ is an analytic stack.
To such an object, once a base point is chosen, one can attach a (topological) fundamental group, cf.~Definition~\ref{fundgp-stack}.

\subsubsection{Independence of $\up$}
\label{indep-p}

Let us suppose in the rest of this section that $g \geq 1$, or that $g=0$ and $m \geq 3$ (the remaining cases $g=0, m\in \{1, 2\}$ will be discussed below).
Then $\Mgma$ is the stacky quotient of the Teichmüller space $T_{g, m}$ by the (pure) mapping class group $\Gamma_{g, m}$ (cf.~\cref{def-mgma}).
Since $T_{g, m}$ is contractible and $\Gamma_{g, m}$ is discrete we have $\pi_1(\Mgma)=\Gamma_{g, m}$, and all higher homotopy groups are trivial.

The map $\wmda\rightarrow \Mgma$ induces a map between the attached topological stacks (cf.~\cref{topst}) which, by Theorem~\ref{mainthm-an}, is a weak Serre fibration in the sense of~\cite[Definition 3.6]{noo14}.
Its fibres are products of spaces of irregular types of pole order at most $p_i$ and of root order $\bd_i$ at a point, denoted by $\IT^{\tc, \leq p_i, \bd_i}$, for $1 \leq i \leq m$. Assume that the spaces $\IT^{\tc, \leq p_i, \bd_i}$ are non-empty; then they are path connected, hence the topological stack attached to $\wmda$ is path connected (as the same is true for $\Mgma$).
Hence its fundamental group is independent of the base point, up to isomorphism; we will thus abusively omit base points from our notation.

Let us denote the fundamental group of $\prod_{i=1}^m \IT^{\tc, \leq p_i, \bd_i}$ by $\mathsf L\Gamma^{\tc, \leq \up, \ud}$, and the fundamental group of $\wmda$ by $\mathsf W\Gamma_{g, m}^{\tc, \leq \up, \ud}$.
The homotopy exact sequence~\cite[Theorem 5.2]{noo14} coming from the map $\wmda\rightarrow \Mgma$ yields a short exact sequence
\begin{equation}
	\label{homes}
	1\rightarrow \mathsf L\Gamma^{\tc, \leq \up, \ud}\rightarrow \mathsf W\Gamma_{g, m}^{\tc, \leq \up, \ud} \rightarrow \Gamma_{g, m}\rightarrow 1.
\end{equation}

For $\up' \geq \up$ (i.e., $p_i' \geq p_i$ for $1 \leq i \leq m$) we have natural maps $\IT^{\tc, \leq p_i, \bd_i}\rightarrow \IT^{\tc, \leq p_i', \bd_i}$ and $\wmda\rightarrow \mathsf{WM}_{g, m}^{\tc, \leq \up', \ud}$.
The induced map $\mathsf L\Gamma^{\tc, \leq \up, \ud}\rightarrow \mathsf L\Gamma^{\tc, \leq \up', \ud}$ is an isomorphism (cf.~\cite[Remark 2.2]{drt22}), hence chasing the diagram consisting of the short exact sequences \eqref{homes} for $\up$ and $\up'$ we deduce that the map $\mathsf W\Gamma_{g, m}^{\tc, \leq \up, \ud}\rightarrow \mathsf W\Gamma_{g, m}^{\tc, \leq \up', \ud}$ is an isomorphism.

Therefore, we may give the following definition.

\begin{defin}
	Given integers $g\geq 0$, $m\geq 1$, and $\ud=(d_{\alpha, i})_{1\leq i \leq m, \alpha \in \Phi}$ with $d_{\alpha,i} \geq 0$, the (pure) wild mapping class group is
	\begin{equation*}
		\mathsf W\Gamma_{m ,g}^{\tc, \ud}=\pi_1(\wmda, x),
	\end{equation*}
	for any base point $x$ and any tuple $\up=(p_1, \ldots, p_m)\in \Z^m$ such that $p_i \geq \max(\bd_i)$ for $1 \leq i \leq m$.
\end{defin}

\subsubsection{Local vs global mapping class group}
\label{lvgmcg}

The short exact sequence \eqref{homes} can thus be written
\begin{equation*}
	1\rightarrow \mathsf L\Gamma^{\tc, \ud}\rightarrow \mathsf W\Gamma_{g, m}^{\tc, \ud} \rightarrow \Gamma_{g, m}\rightarrow 1.
\end{equation*}
It expresses the wild mapping class group as an extension of the usual mapping class group by a product of ``local'' wild mapping class groups, studied in \cite{drt22}.

\section{Examples}

\subsection{A chart of $\wmda$}

As in the previous section, let us suppose that $g \geq 1$, or $g=0$ and $m \geq 3$.
By Theorem~\ref{mainthm-an} and its proof, letting $\widetilde{WT}_{g, m}^{\tc, \leq \up, \ud}=\wmda \times_{\Mgma}T_{g, m}$, the top row of the following Cartesian diagram provides an atlas of $\wmda$:
\begin{center}
	\begin{tikzcd}
		\widetilde{WT}_{g, m}^{\tc, \leq p, \ud} \arrow[r] \arrow[d]& \wmda \arrow[d]\\
		T_{g, m} \arrow[r]
		& \Mgma.
	\end{tikzcd}
\end{center}

By Theorem~\ref{mainthm-an} we know that $\widetilde{WT}_{g, m}^{\tc, \leq p, \ud}$ is a complex manifold, and it is a locally trivial fibration over $T_{g, m}$ whose fibres are products of moduli spaces $\IT^{\tc, \leq p_i, \bd_i}$ of irregular types of pole order at most $p_i$ and of root order $\bd_i$ at a point, for $1 \leq i \leq m$.

Better, we have the following statement, which we include for completeness even though it will not be needed later.

\begin{lem}
	The fibration $\widetilde{WT}_{g, m}^{\tc, \leq \up, \ud}\rightarrow T_{g, m}$ is trivial, i.e., there is a biholomorphism
	\begin{equation*}
		T_{g, m}\times \prod_{i=1}^m \IT^{\tc, \leq p_i, \bd_i} \simeq \widetilde{WT}_{g, m}^{\tc, \leq \up, \ud}.
	\end{equation*}
\end{lem}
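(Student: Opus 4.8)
The plan is to trivialise the fibration $\widetilde{WT}_{g,m}^{\tc, \leq \up, \ud} \to T_{g,m}$ by exhibiting an explicit biholomorphism over the contractible, simply-connected base $T_{g,m}$. The key structural input is that, by Theorem~\ref{mainthm-an}, the total space is the restriction to $T_{g,m}$ of the fibre product of the vector bundles $\IT^{\tc, \leq p_i}_{B, a_i}$ (from Proposition~\ref{irrt-vectbun}), cut down to the locally closed root strata $\IT^{\tc, \leq p_i, \bd_i}_{B, a_i}$. So the real content is that these bundles of irregular types become trivial once pulled back to $T_{g,m}$, in a way compatible with the root-stratification.

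First I would recall that over $T_{g,m}$ there is a \emph{universal} pointed Riemann surface $\Sigma_{T} \to T_{g,m}$ with its tautological sections $\ua = (a_1, \ldots, a_m)$, classified by the map $T_{g,m} \to \Mgma$. By Lemma~\ref{push-free} the pushforward $\pi_* \T_{\Sigma_T, a_i}^{\leq p_i}$ is a locally free sheaf of rank $p_i$ on $T_{g,m}$; since $T_{g,m}$ is a contractible (in fact Stein, by Bers) complex manifold, every holomorphic vector bundle on it is holomorphically trivial. Hence $\T^{\leq p_i}_{T_{g,m}, a_i} \simeq T_{g,m} \times \C^{p_i}$ and, tensoring with $\tc$, the irregular-type bundle $\IT^{\tc, \leq p_i}_{T_{g,m}, a_i} \simeq T_{g,m} \times (\tc^{p_i})$ as a holomorphic vector bundle over $T_{g,m}$.

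The next step is to check that this trivialisation is compatible with the root stratification, so that it restricts to the strata. Under the submersions $\IT^{\tc, \leq p_i}_{B, a_i} \to \T^{\leq p_i}_{B, a_i}$ induced by each root $\alpha \in \Phi$ (cf.~\cref{rootstrat}), the stratum $\IT^{\tc, \leq p_i, \bd_i}_{T_{g,m}, a_i}$ is carved out fibrewise by the conditions that the $\alpha$-component have exact pole order $d_{\alpha,i}$. By the local description in~\cref{locstr-str}, in the trivialising frame $(z^{-1}, \ldots, z^{-p_i})$ these conditions are \emph{linear} in the fibre coordinates $\tc^{p_i}$ (intersections of hyperplanes $\ker(\alpha)$, minus lower strata), and crucially they do not involve the base coordinate on $T_{g,m}$. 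Since a holomorphic trivialisation of the ambient vector bundle is a fibrewise-linear isomorphism over $T_{g,m}$, it identifies the subspace $C_j = \bigcap_{d_{\alpha,i}<j}\ker(\alpha) \cap \bigcap_{d_{\alpha,i}=j}(\tc \smallsetminus \ker(\alpha))$ of each fibre with a fixed model subspace, and hence restricts to a biholomorphism $\IT^{\tc, \leq p_i, \bd_i}_{T_{g,m}, a_i} \simeq T_{g,m} \times \IT^{\tc, \leq p_i, \bd_i}$. Taking the fibre product over $T_{g,m}$ of these $m$ biholomorphisms yields the claimed trivialisation $T_{g,m} \times \prod_{i=1}^m \IT^{\tc, \leq p_i, \bd_i} \simeq \widetilde{WT}_{g,m}^{\tc, \leq \up, \ud}$.

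The main obstacle will be the first step: the assertion that the vector bundle $\T^{\leq p_i}_{T_{g,m}, a_i}$ is holomorphically trivial, not merely topologically trivial. This needs the genuinely analytic fact that $T_{g,m}$ carries no nontrivial holomorphic vector bundles, which follows because Teichmüller space is Stein (a bounded domain of holomorphy, by the Bers embedding) and contractible, so Grauert's Oka principle identifies holomorphic and topological vector bundles, and topological triviality follows from contractibility. Once this global-analytic input is granted, the compatibility with the stratification is essentially the bookkeeping already carried out in~\cref{locstr-str}, made global by the frame; I would make sure the frame of $\pi_*\T^{\leq p_i}_{\Sigma_T, a_i}$ is chosen so that its restriction to each fibre matches the monomial basis $(z^{-1}, \ldots, z^{-p_i})$ used there, so that the linear description of the strata holds verbatim in the trivialising coordinates.
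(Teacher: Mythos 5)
Your proposal is correct and is essentially the paper's own proof: the paper likewise reduces the statement to the holomorphic triviality of the vector bundle $\wma \times_{\Mgma} T_{g,m}$ over $T_{g,m}$, which it deduces from the fact that $T_{g,m}$ is a contractible domain of holomorphy (Bers--Ehrenpreis) together with the Oka--Grauert principle (Forstneri\v{c}, Theorem 5.3.1). Your extra care in making the trivialisation compatible with the root stratification---best phrased as choosing a frame adapted to the filtration $\T^{\leq 1}_{|T_{g,m}} \subset \cdots \subset \T^{\leq p_i}_{|T_{g,m}}$, which exists over a contractible Stein base, rather than a literal global monomial frame, which need not exist---fills in a step the paper leaves implicit.
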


\begin{proof}
	It suffices to prove that $\wma \times_{\Mgma}T_{g, m}$ is trivial as a (holomorphic) vector bundle over $T_{g, m}$.
	This follows from the fact that $T_{g, m}$ is a contractible domain of holomorphy~\cite[Theorem 2]{beeh64} and from~\cite[Theorem 5.3.1]{for17}.
\end{proof}

\subsection{Genus zero examples}

Let us describe in this section the stacks $\wmda$ in the case $g=0$. Besides providing the first concrete examples of the spaces we defined, the description below will be useful in \cref{sect-wildalg} to determine which of the stacks $\wmda$ - or rather, algebraic analogues thereof -  are Deligne--Mumford.

\subsubsection{The case $m \geq 3$}

If $m \geq 3$ then $\zMgma$ is a complex manifold and by Corollary~\ref{repn-mgm} $\zwmda$ is a manifold as well.
In addition, we claim that $\zwmda\rightarrow \zMgma$ is a trivial fibration, hence we have an isomorphism
\begin{equation*}
	\mathsf W\Gamma_{0, m}^{\tc, \ud}\simeq \mathsf L\Gamma^{\tc, \ud} \times \Gamma_{0, m}.
\end{equation*}

To prove our claim, we recall first of all that every Riemann surface $\Sigma \rightarrow B$ of genus zero with $m$ disjoint sections admits a unique isomorphism to $(\mathbf{P}^1, (0, 1, \infty, \sigma_4, \ldots, \sigma_{m}))$, with $\sigma_i \colon B \rightarrow \mathbf{P}^1 \smallsetminus \{0, 1, \infty\}$ holomorphic.
Hence $\zMgma$ is isomorphic to $\mathrm{Conf}_{m-3}(\C\smallsetminus \{0, 1\})$, i.e. the complement of the diagonals in $(\C\smallsetminus \{0, 1\})^{m-3}$.
The universal family over $\zMgma$ is $\mathbf{P}^1\times \zMgma$ with sections $(\sigma_1, \ldots, \sigma_m)$ induced by maps $\varphi_i \colon \mathrm{Conf}_{m-3}(\C\smallsetminus \{0, 1\}) \rightarrow \mathbf{P}^1$ for $1 \leq i \leq m$ described as follows. The map $\varphi_1$ (resp. $\varphi_2, \varphi_3$) is constantly equal to 0 (resp. 1, $\infty$) and, for $i \geq 4$, we have $\varphi_i(x_1, \ldots, x_{m-3})=x_{i-3}$.
In particular, for each $i \in \{1, \ldots, m\}$, there is a holomorphic function $f$ on a neighbourhood of $\sigma_i(\zMgma)\subset \mathbf{P}^1\times \zMgma$ whose vanishing locus is the divisor    $\sigma_i(\zMgma)$. Indeed, we can take $f=z$ (resp. $z-1$, resp. $\frac{1}{z}$) for $i=1$ (resp. $i=2, 3$), and $f=z-x_{i-3}$ for $i \geq 4$.
The discussion in~\cref{tails} (cf. in particular Lemma \ref{push-free} and Lemma \ref{sect-free}) implies that $\zwma$ is a trivial vector bundle on $\zMgma$, and $\zwmda\rightarrow \zMgma$ is a trivial fibration.

This can also be checked directly by hand as follows. Let $\Sigma=\mathbf{P}^1\times \zMgma$, let $B=\zMgma$ and let $\pi \colon \Sigma \rightarrow B$ be the projection.
It suffices to show that, for $1 \leq i \leq m$, the bundle of irregular types on $\Sigma$ with pole at $\sigma_i$ of order at most $p_i$ is trivial.
In turn, to prove this it is enough to prove that $\pi_*\T_{\Sigma, \sigma_i}^{\leq p_i}$ is isomorphic to $O_B^{p_i}$; we will do this for $i \geq 4$ (the argument for $i \leq 3$ is similar).
Fix such an $i$, and consider the holomorphic function $w=z-x_{i-3}$ on $\C \times B$, whose vanishing locus is the image of $\sigma_i$. For every open $U \subset B$, meromorphic functions on $\C \times U$ having a pole of order at most $p_i$ along $\sigma_i$ can be written, up to holomorphic terms, in the form $g=\sum_{j=1}^{p_i}\frac{1}{w^j}g_j(x_1, \ldots, x_{m-3})$, where the functions $g_j$ are holomorphic on $U$. The map sending $g$ to $(g_1, \ldots, g_{p_i})$ yields an isomorphism $\pi_*\T_{\Sigma, \sigma_i}^{\leq p_i}(U)\xrightarrow{\sim} O_B(U)^{p_i}$.

\subsubsection{}

For $m\in \{1, 2\}$, let $G_m$ be the complex Lie group of automorphisms of $\mathbf{P}^1$ fixing $m$ distinct points.
Concretely, as we will recall later, $G_1$ is isomorphic to the affine group $\C \rtimes \C^\times$ and $G_2$ is isomorphic to $\C^\times$.
The stack $\zMgma$ is isomorphic to the quotient stack $[*/G_m]$, hence we obtain a Cartesian diagram
\begin{center}
	\begin{tikzcd}
		\prod_{i=1}^m \IT^{\tc, \leq p_i, \bd_i} \arrow[r] \arrow[d]& \zwmda \arrow[d]\\
		\{*\} \arrow[r]
		& \zMgma
	\end{tikzcd},
\end{center}
and the top horizontal map is an atlas of $\zwmda$.
We will now describe $\zwmda$ more explicitly, treating separately the cases $m=1$ and $m=2$.

\subsubsection{The case $m=1$}\label{g0m1}
Fix an integer $p \geq 1$, and fix $\mathbf{d}=(d_\alpha)_{\alpha \in \Phi}$ with $0 \leq d_\alpha \leq p$.
For every complex manifold $B$, an irregular type on $(B \times \mathbf{P}^1, \mathrm{Id} \times 0)$ of pole order at most $p$ is a section of $\tc\otimes_\C (z^{-p}\Oo_{B \times \mathbf{P}^1}/\Oo_{B \times \mathbf{P}^1})\simeq \tc\otimes_\C \Oo_B^p$.
Therefore, such irregular types are in bijection with holomorphic functions from $B$ to the space $\IT^{\tc, \leq p}$ of irregular types with a pole of order at most $p$ at a point; irregular types which in addition have root order $\mathbf{d}$ are in bijection with holomorphic functions from $B$ to $\IT^{\tc, \leq p, \mathbf{d}}$.
It follows that $\IT^{\tc, \leq p, \mathbf{d}}$ can be identified with the stack sending $B$ to the groupoid of quadruples $(\pi\colon \Sigma \rightarrow B, a, Q, \psi)$ consisting of an admissible family of wild Riemann surfaces of genus zero with one section, pole order at most $p$ and root order $\mathbf{d}$, together with an isomorphism $\psi \colon \Sigma \rightarrow B \times \mathbf{P}^1$ over $B$ sending $a$ to $Id \times 0$.
Precisely, such an identification is obtained sending $(\pi\colon \Sigma \rightarrow B, a, Q, \psi)$ to $(\psi^{-1})^*(Q)$.

The group $G_1$ therefore acts on $\IT^{\tc, \leq p, \mathbf{d}}$ modifying $\psi$, i.e. it acts by pullback on irregular types.
Quotienting out by this action amounts to forgetting the choice of isomorphism $\psi$; as such an isomorphism always exists locally on $B$, we find that
\begin{equation}
	\label{wm01}
	\mathsf{WM}_{0, 1}^{\tc, \leq p, \mathbf{d}}=[\IT^{\tc, \leq p, \mathbf{d}}/G_1].
\end{equation}
More precisely, a $B$-point of the quotient stack on the right-hand side corresponds by definition to a $G_1$-torsor $T$ over $B$ with a $G_1$-equivariant map $\varphi\colon T \rightarrow \IT^{\tc, \leq p, \mathbf{d}}$, which yields an irregular type $Q_T$ over $(T\times \mathbf{P}^1, \mathrm{Id}\times 0)$.
The $G_1$-torsor $T$ gives rise to a pointed Riemann surface of genus zero $(\Sigma \rightarrow B, a)$, obtained gluing constant families of $\mathbf{P}^1$ over open trivialising subspaces $B_i\subset B$, along isomorphisms obtained comparing trivialisations.
Similarly, the irregular type $Q_T$ restricts to a $G_1$-equivariant irregular type $Q_{T, i}$ over each $(B_i\times G_1)\times \mathbf{P}^1$.
These irregular types are uniquely determined by their restrictions $Q_i$ to $B_i \times \{1\} \times \mathbf{P}^1$, which glue to a family of irregular types $Q$ over $(\Sigma \rightarrow B, a)$.

Note that \eqref{wm01} implies that the fundamental group of $\mathsf{WM}_{0, 1}^{\tc, \leq p, \mathbf{d}}$ does not depend on the choice of $p \geq \max(\mathbf{d})$: indeed, the fibration $\IT^{\tc, \leq p, \mathbf{d}}\rightarrow \mathsf{WM}_{0, 1}^{\tc, \leq p, \mathbf{d}}$ gives rise to an exact sequence
\begin{equation*}
	\Z \simeq \pi_1(G_1)\rightarrow \pi_1(\IT^{\tc, \leq p, \mathbf{d}})\rightarrow \pi_1(\mathsf{WM}_{0, 1}^{\tc, \leq p, \mathbf{d}})\rightarrow 1.
\end{equation*}
The claimed independence follows, as in~\cref{indep-p}, from the analogous property of the fundamental group of $\IT^{\tc, \leq p, \mathbf{d}}$ .

Let us now describe the stack $[\IT^{\tc, \leq p, \mathbf{d}}/G_1]$; to simplify our notation, we regard $\IT^{\tc, \leq p, \mathbf{d}}$ as a moduli space of irregular types at $\infty$, rather than at $0$ as above.
Hence we can identify $G_1$ with the group of affine transformations $z \mapsto rz+s$, with $r \in \C^\times$ and $s\in \C$.
The action of $(s, r) \in \C \rtimes \C^\times \simeq G_1$ on $\IT^{\tc, \leq p, \mathbf{d}}$ sends a point corresponding to an irregular type $Q=\sum_{j=1}^pA_jz^j$ to the irregular type obtained replacing $z$ by $rz+s$.
So
\begin{equation*}
	(s, r)\cdot Q=r^pA_pz^p+\sum_{j=1}^{p-1}A'_jz^j.
\end{equation*}

Let us now distinguish two subcases:

\begin{enumerate}
	\item for $p=1$, an element $(s, r)$ acts by sending $A_1$ to $rA_1$. Indeed, $rA_1z+sA_1$ differs from $rA_1z$ by a term holomorphic at infinity;

	\item for $p\geq 2$ instead we look at the subleading coefficient:
	      \begin{equation*}
		      (s, r)\cdot Q - r^pA_pz^p = (pr^{p-1}sA_p+r^{p-1}A_{p-1})z^{p-1}+\sum_{j=1}^{p-2}A'_jz^j.
	      \end{equation*}
\end{enumerate}

In the first case $\C\subset G_1$ acts trivially on $\IT^{\tc, \leq 1}$; in particular, stabilisers in $G_1$ of points of $\IT^{\tc, \leq 1}$ are never finite.
Thus, given $\mathbf{d}=(d_\alpha)_{\alpha \in \Phi}$ with $0 \leq d_\alpha \leq 1$, for any $p\geq 1$ the space $\IT^{\tc, \leq p, \mathbf{d}}$, if non-empty, contains points with stabilisers of infinite order.

In the second case the identity $(s, r)\cdot Q = Q$ forces $r$ to be a $p$-th root of unity, if $A_p \neq 0$.
Furthermore, for each possible $r$, there is at most one $s$ such that $(pr^{p-1}sA_p+r^{p-1}A_{p-1})=A_{p-1}$.
Hence, stabilisers of points in strata $\mathsf{WM}_{0, 1}^{\tc, \leq p, \mathbf{d}}$ are finite whenever $\max(\mathbf{d})\geq 2$.

More precisely, in case (2) the stack $\mathsf{WM}_{0, 1}^{\tc, \leq p, \mathbf{d}}$ is isomorphic to a locally closed substack of a weighted projective stack. To see this, choose $\alpha \in \Phi$ such that $d_{\alpha}\geq 2$, and let $Z_\alpha \subset \IT^{\tc, \leq p, \mathbf{d}}$ be the submanifold of irregular types such that $\alpha(A_{d_{\alpha}-1})=0$. Consider the map $\IT^{\tc, \leq p, \mathbf{d}}\rightarrow Z_\alpha$ sending $Q$ to $(s(Q), 1)\cdot Q$, where $s(Q)=-\frac{\alpha(A_{d_\alpha-1})}{p\alpha(A_{d_\alpha})}$. It induces a map $\IT^{\tc, \leq p, \mathbf{d}}/\C\rightarrow Z_\alpha$ inverse to the one induced from the inclusion $Z_\alpha \subset \IT^{\tc, \leq p, \mathbf{d}}$. Therefore the map $Z_\alpha \rightarrow \IT^{\tc, \leq p, \mathbf{d}}/\C$ is a biholomorphism, hence we find that $\mathsf{WM}_{0, 1}^{\tc, \leq p, \mathbf{d}}=[Z_\alpha/\C^\times]$. The group $\C^\times$ acts on $Z_\alpha$ as follows: an element $r \in \C^\times$ sends an irregular type with coefficients $(A_1, \ldots, A_p)$ to the irregular type with coefficients $(rA_1, \ldots, r^pA_p)$. Therefore $[Z_\alpha/\C^\times]$ is locally closed substack of a weighted projective stack.

\subsubsection{The case $m=2$}\label{g0m2}

Fix $\up=(p_1, p_2)\in \Z_{\geq 0}^2 \smallsetminus \{(0, 0)\}$, and tuples $\ud=(\mathbf{d}_1, \mathbf{d}_2)$ with $\mathbf{d}_i=(d_{\alpha, i})_{\alpha \in \Phi}$ such that $0 \leq d_{\alpha, i} \leq p_i$. Let us assume that the integers $d_{\alpha, i}$ are not all equal to 0.
Let $\IT^{\tc, \leq \up,\ud}=\IT^{\tc, \leq p_1,\mathbf{d}_1}\times \IT^{\tc, \leq p_2,\mathbf{d}_2}$.
We identify this product with a moduli space of irregular types on $(\mathbf{P}^1, (0, \infty))$, and endow it with the resulting action of $G_2=\C^\times$. Arguing as in the previous section one shows that
\begin{equation*}
	\mathsf{WM}_{0, 2}^{\tc, \leq \up, \ud}=[\IT^{\tc, \leq \up,\ud}/G_2].
\end{equation*}
Therefore, we obtain an exact sequence
\begin{equation*}
	\Z\simeq \pi_1(G_2)\rightarrow \pi_1(\IT^{\tc, \leq \up,\ud})\rightarrow \pi_1(\mathsf{WM}_{0, 2}^{\tc, \leq \up, \ud})\rightarrow 1,
\end{equation*}
which shows in particular that the middle term is independent of the choice of $\up$ (subject to the condition $p_i \geq \max(\mathbf{d}_i))$.
Note that one could also have obtained an exact sequence as above from the fibration $\mathsf{WM}_{0, 2}^{\tc, \leq \up, \ud}\rightarrow \mathsf{M}_{0, 2}$, using that $\pi_1(\mathsf{M}_{0, 2})$ is trivial and $\pi_2(\mathsf{M}_{0, 2})=\Z$.

The action of $G_2$ on $\IT^{\tc, \leq \up,\ud}$ always has finite stabilisers.
Indeed, a point of $\IT^{\tc, \leq \up,\ud}$ is of the form $(Q_1, Q_2)\neq (0, 0)$, where $Q_1=\sum_{j=1}^{p_1}A_{j, 1}z^{-j}$ and $Q_2=\sum_{j=1}^{p_2}A_{j, 2}z^{j}$.
The action of $r \in \C^\times$ is obtained replacing $z$ by $rz$, hence sending $A_{j, 1}$ to $r^{-j}A_{j, 1}$ and $A_{j, 2}$ to $r^jA_{j, 2}$; therefore, stabilisers are finite groups of roots of unity.

If $\up$ is of the form $\up=(0, p_2)$ with $p_2>0$ then $\mathsf{WM}_{0, 2}^{\tc, \leq \up, \ud}$ is the quotient of $\IT^{\tc, \leq p_2, \mathbf{d}_2}$ by the action sending $A_{j, 2}$ to $r^jA_{j, 2}$. As $\mathbf{d}_2$ is not identically 0 (because we are assuming that the $d_{\alpha, i}$ are not all 0) one of the $A_{j, 2}$ is necessarily non-zero. Hence, the stack $\mathsf{WM}_{0, 2}^{\tc, \leq \up, \ud}$ is a locally closed substack of a weighted projective stack. For general $\up$, the stack $\mathsf{WM}_{0, 2}^{\tc, \leq \up, \ud}$ is representable over a weighted projective stack. Indeed, we may assume without loss of generality that  $\mathbf{d}_2$ is not identically 0; let $\up'=(0, p_2)$ and $\ud'=(0, \mathbf{d}_2)$. Forgetting the first irregular type we obtain a map $\mathsf{WM}_{0, 2}^{\tc, \leq \up, \ud}\rightarrow \mathsf{WM}_{0, 2}^{\tc, \leq \up', \ud'}$ which is representable: indeed, the same argument as in the proof of Theorem \ref{mainthm-an} shows that the fibre product of $\mathsf{WM}_{0, 2}^{\tc, \leq \up, \ud}$ with a manifold $B \rightarrow \mathsf{WM}_{0, 2}^{\tc, \leq \up', \ud'}$ is isomorphic to $\IT^{\tc, \leq p_1, \mathbf{d}_1}_{B, a_1}$.

\subsubsection{Some isomorphisms between moduli spaces of wild Riemann surfaces of genus 0}

Fix integers $n \geq 0$ and $m \geq 0$, and let $\mathfrak{t}_{n+1} \subset \mathfrak{sl}_{n+1}$ be the subalgebra of diagonal matrices of trace zero. Let $\mathfrak{t}_{n+1}^{\mathrm{reg}} \subset \mathfrak{t}_{n+1}$ be the subspace of matrices with distinct entries. We will construct an isomorphism
\begin{equation}\label{eq-exciso}
	\mathsf{WM}_{0, m+2}^{\tc_{n+1}, \leq \underline{p}, \ud} \xrightarrow{\sim} \mathsf{WM}_{0, n+2}^{\tc_{m+1}, \leq \underline{p'}, \underline{\mathbf{d}'}},
\end{equation}
where $\underline{p}$ (resp. $\underline{p'}$) consists of one 1 followed by $m+1$ (resp. $n+1$) 0, and $\mathbf{d}_{1}$, $\mathbf{d}_{1}'$ are identically equal to 1. This rests on the following elementary lemma, where $X_n$ denotes the complement of the diagonals in $(\C^\times)^n$.
\begin{lem}
	For $n \geq 1$, if $\mathrm{diag}(b_1, \ldots, b_{n+1})$ belongs to $\tc_{n+1}^{\mathrm{reg}}$ then $(b_2-b_1, \ldots, b_{n+1}-b_1)$ belongs to $X_n$. The resulting map $\varphi_n: \tc_{n+1}^{\mathrm{reg}} \rightarrow X_n$ is a biholomorphism.
\end{lem}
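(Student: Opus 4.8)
The plan is to realise $\varphi_n$ as the restriction of a linear isomorphism of ambient vector spaces and then match up the open conditions cutting out $\tc_{n+1}^{\mathrm{reg}}$ and $X_n$. First I would introduce the linear map $L \colon \tc_{n+1}\rightarrow \C^n$ sending $\mathrm{diag}(b_1, \ldots, b_{n+1})$ to $(b_2-b_1, \ldots, b_{n+1}-b_1)$. This is manifestly holomorphic, being linear, and by construction $\varphi_n$ is its restriction to the open subset $\tc_{n+1}^{\mathrm{reg}}\subset \tc_{n+1}$.

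The key step is to check that $L$ is a linear isomorphism. For injectivity, if $L(\mathrm{diag}(b))=0$ then $b_1=b_2=\cdots=b_{n+1}$, and the trace-zero condition $\sum_j b_j=0$ forces $b_1=0$, hence $\mathrm{diag}(b)=0$. Since $\dim_\C \tc_{n+1}=n=\dim_\C \C^n$, injectivity gives bijectivity. Concretely, the inverse sends $(c_1, \ldots, c_n)$ to $\mathrm{diag}(b_1, b_1+c_1, \ldots, b_1+c_n)$ with $b_1=-\frac{1}{n+1}\sum_{i=1}^n c_i$; this is the unique value of $b_1$ making the trace vanish, and it depends linearly, hence holomorphically, on the $c_i$. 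Thus $L$ is a biholomorphism of complex vector spaces.

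It remains to verify that $L$ carries $\tc_{n+1}^{\mathrm{reg}}$ bijectively onto $X_n$, which amounts to matching the defining inequalities. Writing $c_i=b_{i+1}-b_1$, the regularity condition that all the $b_j$ be distinct splits into the conditions $b_{i+1}\neq b_1$, that is $c_i\neq 0$ (equivalently $(c_1, \ldots, c_n)\in (\C^\times)^n$), together with $b_{i+1}\neq b_{j+1}$ for $i\neq j$, that is $c_i\neq c_j$ (equivalently the complement-of-diagonals condition). These are exactly the equations defining $X_n$ inside $\C^n$, so $L^{-1}(X_n)=\tc_{n+1}^{\mathrm{reg}}$ and $\varphi_n$ restricts to a bijection onto $X_n$. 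Being the restriction of a biholomorphism between open subsets that correspond under it, $\varphi_n$ is itself a biholomorphism.

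There is no serious obstacle here: the argument is elementary linear algebra once one notices that the trace-zero normalisation is precisely what allows reconstruction of $b_1$ from the differences $b_{i+1}-b_1$. The only point requiring a little care is this role of the trace-zero condition, which is what makes $L$ invertible — the differences alone would lose the common additive shift — while everything else is a direct bookkeeping of the open conditions.
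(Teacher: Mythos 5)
Your proposal is correct and follows essentially the same route as the paper: the paper's proof simply exhibits the inverse map $(x_1, \ldots, x_n) \mapsto \mathrm{diag}(b_1, \ldots, b_{n+1})$ with $b_1=-\frac{1}{n+1}(x_1+\ldots+x_n)$ and $b_i=b_1+x_{i-1}$ for $i \geq 2$, which is exactly the inverse you construct. Your additional remarks (the dimension count and the explicit matching of the open conditions defining $\tc_{n+1}^{\mathrm{reg}}$ and $X_n$) are details the paper leaves implicit, not a different argument.
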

\begin{proof}
	The map sending $(x_1, \ldots, x_n)$ to $(b_1, \ldots, b_{n+1})$  where $b_1=-\frac{1}{n+1}(x_1+\ldots +x_n)$ and $b_i=b_1+x_{i-1}$ for $i \geq 2$ is the inverse of $\varphi_n$.
\end{proof}

We obtain a biholomorphism
\begin{equation}\label{eq-framediso}
	(\varphi_n, \varphi_m^{-1}): \tc_{n+1}^{\mathrm{reg}}\times X_m \rightarrow X_n \times \tc_{m+1}^{\mathrm{reg}}.
\end{equation}
The left-hand side is the moduli space of $(m+2)$-tuples of distinct points on $\mathbf{P}^1$ of the form $(a_1=0, a_2, \ldots, a_{m+1}, a_{m+2}=\infty)$, together with a $\tc_{n+1}$-valued irregular type at 0 of pole order one and root order one at each root. The right-hand side has a similar modular interpretation, with $m$ and $n$ switched. The group $\C^\times$ of automorphisms of $\mathbf{P}^1$ fixing $0$ and $\infty$ acts on the two spaces in \eqref{eq-framediso}: the action of $\alpha \in \C^\times$ on $X_m$ pulling back points sends $(x_1, \ldots, x_m) \in X_m$ to $(\alpha^{-1}x_1, \ldots, \alpha^{-1}x_m)$, and the action pulling back irregular types sends $\frac{Q}{z}$ to $\frac{Q}{\alpha z}$, hence it is given by multiplication by $\alpha^{-1}$ on $ \tc_{n+1}^{\mathrm{reg}}$. For the same reason $\alpha$ acts by multiplication by $\alpha^{-1}$ on the right-hand side of \eqref{eq-framediso}, hence the map in \eqref{eq-framediso} induces the isomorphism in \eqref{eq-exciso}.

\subsection{Genus one examples}

We have $\mathsf{M}_{1, 1}=[\mathbf{H}/\mathrm{SL}_2(\Z)]$, where $\mathbf{H}=\{\tau \in \C \mid \mathrm{Im}(\tau)>0\}$ and $\mathrm{SL}_2(\Z)$ acts on it via Möbius transformations.
\subsubsection{} Over $\mathbf{H}$ we have an elliptic curve (i.e., a family of compact 1-pointed Riemann surfaces of genus one)
\begin{equation*}
	\mathcal{E}=(\mathbf{H}\times \mathbf{C})/\Z^2, \; \sigma \colon \mathbf{H} \rightarrow \mathcal{E},
\end{equation*}
where $(r, s)\in \Z^2$ acts sending $(\tau, z)$ to $(\tau, z+r\tau+s)$, and $\sigma(\tau)=(\tau, 0)$.
Therefore, the fibre $\mathcal{E}_\tau$ of $\mathcal{E}$ over $\tau \in \mathbf{H}$ is the torus $\C/(\Z+\tau \Z)$ with the marked point $0$.
The action of $\mathrm{SL}_2(\Z)$ on $\mathbf{H}$ lifts to an action on $\mathcal{E}$, induced by
\begin{align}
	\label{sl2Zact}
	\mathrm{SL}_2(\Z)\times (\mathbf{H}\times \mathbf{C}) & \rightarrow \mathbf{H}\times \mathbf{C}                          \\
	\nonumber \left(
	\begin{pmatrix}
		a & b \\
		c & d
	\end{pmatrix},
	(\tau, z)\right)                                      & \mapsto \left(\frac{a\tau+b}{c\tau+d}, \frac{z}{c\tau+d}\right).
\end{align}

The coordinate $z$ is a local coordinate around each $\sigma(\tau)$, hence irregular types with a pole at $\sigma$ of order at most $p$ can be written as
\begin{equation}
	\label{irrt-M11}
	\sum_{j=1}^p z^{-j}A_j(\tau),
\end{equation}
where each $A_i\colon \mathbf{H}\rightarrow \tc$ is a holomorphic function.
Therefore, we see that $\widetilde{WT}_{1, 1}^{\tc, \leq p}=\mathsf{WM}_{1,1}^{\tc, \leq p}\times_{\mathsf{M}_{1,1}}\mathbf{H}$ is isomorphic to $\mathbf{H}\times \tc^p$.
Furthermore, it comes with an action of $\mathrm{SL}_2(\Z)$ which can be written down explicitly using \eqref{sl2Zact}, and determines the vector bundle $\mathsf{WM}_{1, 1}^{\tc, \leq p}\rightarrow \mathsf{M}_{1, 1}$.
For instance, for $p=1$ the $\mathrm{SL}_2(\Z)$-action on irregular types is given by
\begin{equation*}
	\begin{pmatrix}
		a & b \\
		c & d
	\end{pmatrix}
	\cdot \frac{A_1(\tau)}{z}=(c\tau+d)\frac{A_1(\frac{a\tau+b}{c\tau+d})}{z}.
\end{equation*}
Therefore, the stack $\mathsf{WM}_{1, 1}^{\tc, \leq 1}$ is the quotient of $\widetilde{WT}_{1, 1}^{\tc, \leq 1}\simeq \mathbf{H}\times \tc$ by the $\mathrm{SL}_2(\Z)$-action given by
\begin{equation*}
	\left(\begin{pmatrix}
		a & b \\
		c & d
	\end{pmatrix},
	(\tau, A)\right) \mapsto \left(\frac{a\tau+b}{c\tau+d}, \frac{1}{c\tau+d}A\right).
\end{equation*}
In other words, $\mathsf{WM}_{1, 1}^{\tc, \leq 1}$ is isomorphic to the product (over $\mathsf{M}_{1, 1}$) of $r$ copies of the line bundle $\mathcal{L}^{-1}=(\mathbf{H}\times \C)/\mathrm{SL}_2(\Z)$, where the action of $\mathrm{SL}_2(\Z)$ is given by \eqref{sl2Zact}.
Its inverse $\mathcal{L}$ is isomorphic to the Hodge bundle on $\mathsf{M}_{1, 1}$ (cf.~\cite[\S~5.4]{ha11}), which generates the Picard group of $\mathsf{M}_{1, 1}$, isomorphic to $\Z/12\Z$~\cite[Theorem 6.6]{ha11} (see also~\cite{mum65}).

In particular, $\mathsf{WM}_{1, 1}^{\tc, \leq 1}\rightarrow \mathsf{M}_{1, 1}$ is a non-trivial vector bundle.

\subsubsection{The case $p=1, \mathfrak{g}=\mathfrak{sl}_2$}

Let $p=1$ as above.
Let us take $\tc$ to be the diagonal matrices in $\mathfrak{g}=\mathfrak{sl}_2$, and let $\alpha$ be the usual choice of a positive root.
Let $B \rightarrow \mathsf{M}_{1, 1}$ be a map corresponding to an elliptic curve $(\pi \colon \Sigma \rightarrow B, \sigma)$.
By Theorem~\ref{mainthm-an} the fibre product $\mathsf{WM}_{1, 1}^{\tc, \leq 1}\times_{\mathsf{M}_{1, 1}}B$ is isomorphic to the vector bundle attached to the locally free sheaf $\pi_*(\mathcal{I}_\sigma^{-1}/\mathcal{O}_{\Sigma})\simeq \sigma^*(\mathcal{I}_\sigma^{-1}/\mathcal{O}_{\Sigma})$, which is the inverse of $\sigma^*(\mathcal{I}_\sigma/\mathcal{I}_\sigma^2)\simeq \sigma^*(\Omega^1_{\Sigma/B})$ (where the isomorphism follows from the conormal exact sequence).
We see in particular once again that $\mathsf{WM}_{1, 1}^{\tc, \leq 1}$ corresponds to the inverse of the Hodge bundle over $\mathsf{M}_{1, 1}$.

Let us now consider the substack $\mathsf{WM}_{1, 1}^{\tc, \leq 1, 1}\subset \mathsf{WM}_{1, 1}^{\tc, \leq 1}$ parametrising admissible families of wild Riemann surfaces with pole order 1 at $\alpha$.
In other words, this is the complement of the zero section inside $\mathsf{WM}_{1, 1}^{\tc, \leq 1}$.
Explicitly, we have $\mathsf{WM}_{1, 1}^{\tc, \leq 1, 1}\simeq (\mathbf{H}\times \C^\times)/\mathrm{SL}_2(\Z)$, where the action is given by the formula in \eqref{sl2Zact}.
Note that this action is free, and the stack $\mathsf{WM}_{1, 1}^{\tc, \leq 1, 1}$ is a complex manifold.
The short exact sequence in~\cref{lvgmcg} reads
\begin{equation*}
	1\rightarrow \Z\rightarrow \mathsf W\Gamma_{1 ,1}^{\tc, 1} \rightarrow \mathrm{SL}_2(\Z)\rightarrow 1.
\end{equation*}

Finally, the map $\mathbf{H}\times \C^\times \rightarrow \mathbf{H}\times \C^\times$ sending $(\tau, z)$ to $(\tau, z^{-1})$ induces an isomorphism between $\mathsf{WM}_{1, 1}^{\tc, \leq 1, 1}$ and the complement $\mathcal{L}^\times$ of the zero section inside the Hodge bundle. The fundamental group of $\mathcal{L}^\times$ is isomorphic to the braid group $B_3$~\cite[Corollary 8.3]{ha11}, therefore we find that
\begin{equation*}
	\mathsf W\Gamma_{1 ,1}^{\tc, 1}\simeq B_3.
\end{equation*}

\section{Stacks of wild algebraic curves}\label{sect-wildalg}

In this section we explain how to adapt the above discussion to define algebraic stacks $\wmd$ of (admissible families of) wild algebraic curves; this is straightforward, hence we will be brief. Our main goal is to determine which of the stacks $\wmd$ are Deligne--Mumford, confirming the expectation of \cite{bdr22} in the untwisted case. We will work in the category $\sch$ of schemes over $\Spec(\C)$, and $S$ will always denote such a scheme. If $\mathfrak{g}, \mathfrak{t}$ and the roots $\alpha$ are defined over a field $F$, the construction of $\wmd$ works over $\Spec(F)$.

\subsection{Wild algebraic curves}\label{subsec-wac} By a curve over $S$ of genus $g$ we mean a smooth, proper morphism of schemes $\pi \colon C \rightarrow S$ of relative dimension one, such that all the fibres are geometrically connected of genus $g$. Note that $C/S$ is a smooth curve in the sense of \cite[\S(1.2.1)]{kama85}. By \cite[Lemma 1.1.2]{kama85}, if $\sigma \colon S \rightarrow C$ is a section then the image $\sigma(S)$ is an effective Cartier divisor, i.e. its ideal sheaf $\mathcal{I}_\sigma$ is invertible, generated locally by a nonzerodivisor. Therefore, we can define $\T_{C, \sigma}^{\leq k}$, for $k \geq 0$, as in \cref{tails}. The results in the rest of \cref{tails}, which rely on the purely algebraic Lemma \ref{sect-free}, carry over to this setting.

We define an irregular type (on $S$) with pole at $\sigma$ of order at most $k$ to be a global section of the sheaf $\tc\otimes_\C \T_{C, \sigma}^{\leq k}$. The functor  $\IT^{\tc, \leq k}_{S, \sigma}$ from $S$-schemes to sets sending $S'\rightarrow S$ to $\Gamma(C', \tc\otimes \T_{C', \sigma'}^{\leq k})$, where $C'=C\times_S S'$ and $\sigma'$ is the pullback of $\sigma$, is representable by a vector bundle over $S$, denoted by the same symbol. This is shown as Proposition \ref{irrt-vectbun} using \cite[\href{https://stacks.math.columbia.edu/tag/02KG}{Tag 02KG}]{stacks-project} to prove the analogue of Lemma \ref{bc-onB}, and \cite[Proposition 11.3, \S 11.4]{gowe20}.

\begin{defin}
	A wild algebraic curve of genus $g$ over a scheme $S$ with $m$ marked points, and pole orders bounded by $\up=(p_1, \ldots, p_m)$, is a triple $(\pi \colon C \rightarrow S, \ua, \uQ)$ consisting of a curve $\pi \colon C \rightarrow S$ of genus $g$, an $m$-tuple $\ua=(a_1, \ldots, a_m)$ of mutually non-intersecting sections $a_i \colon S \rightarrow C$ of $\pi$, and an $m$-tuple $\uQ=(Q_1, \ldots, Q_m)$ of irregular types $Q_i \in \Gamma(C, \tc\otimes_\C \T_{C, a_i}^{\leq p_i})$.

	We denote by $\wm(S)$ the groupoid of wild algebraic curves over $S$ of genus $g$ and pole orders bounded by $\up$ (morphisms being isomorphisms of $S$-schemes respecting the sections and the irregular types).
\end{defin}

We obtain a 2-functor $\wm \colon \sch \rightarrow \mathrm{Groupoids}$ sending $S$ to $\wm(S)$, and a map $\wm \rightarrow \Mgm$ forgetting irregular types, where the target is the stack of $m$-pointed algebraic curves of genus $g$. The argument in the proof of Theorem \ref{mainthm-an}(2) shows that this map is representable: if $S \rightarrow \Mgm$ corresponds to a pointed curve $(C, \ua)$ then $\wm\times_{\Mgm} S=\prod_{i=1}^m \IT^{\tc, \leq p_i}_{S, a_i}$, where the product is fibred over $S$.

\begin{ex}
	Let $\mathfrak{g}=\mathfrak{sl}_2$ and $\up=(1, 1, \ldots, 1)$. For every $(\pi: C \rightarrow S, \ua)$, we have $\T_{C, a_i}^{\leq 1}=\mathcal{I}_{a_i}^{-1}/\Oo_{C}$. Seen as a sheaf on $S$, this is isomorphic to the inverse of $a_i^*(\Omega^1_{C/S})$ by  \cite[\href{https://stacks.math.columbia.edu/tag/06BB}{Tag 06BB}]{stacks-project}. Therefore, in this case the line bundles of irregular types at each marked point generate a free subgroup of $\mathrm{Pic}(\Mgm)$ for $g \geq 3$; cf. \cite[Theorem 2]{ac87}, where the line bundles $a_i^*(\Omega^1_{C/S})$ are denoted by $\psi_i$.
\end{ex}

\begin{lem}
	$\wm$ is an algebraic stack for the $fppf$ topology.
\end{lem}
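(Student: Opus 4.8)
The plan is to reduce the assertion to the known algebraicity of $\Mgm$ by exploiting the representability of the forgetful morphism. Recall from the discussion just above that the map $\wm \to \Mgm$ erasing the irregular types is representable: for any $S \to \Mgm$ classifying a pointed curve $(C, \ua)$ one has $\wm \times_{\Mgm} S = \prod_{i=1}^m \IT^{\tc, \leq p_i}_{S, a_i}$, which is a vector bundle over $S$ and in particular a scheme. Thus $\wm$ carries a representable morphism to an algebraic stack, and the statement will follow from the general principle that a stack in groupoids representable over an algebraic stack is itself algebraic, whose mechanism I spell out in the last paragraph.

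First I would verify that $\wm$ is a stack for the $fppf$ topology, arguing as in Proposition~\ref{wmstack}. Given an $fppf$ cover $S' \to S$ equipped with descent data, the underlying pointed curves descend because $\Mgm$ is an $fppf$ stack, while the irregular types descend because they are global sections of the quasi-coherent sheaves $\tc \otimes_\C \T_{C, a_i}^{\leq p_i}$; equivalently, they are $S$-morphisms to the representing vector bundles $\IT^{\tc, \leq p_i}_{S, a_i}$, and morphisms into a scheme form an $fppf$ sheaf. This is the one point where the argument genuinely departs from the analytic case: one needs $fppf$, rather than merely Zariski, descent for sections of quasi-coherent sheaves.

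Then I would invoke the algebraicity of $\Mgm$, the stack of smooth proper $m$-pointed genus $g$ curves, which is a well-known algebraic stack (no stability hypothesis being required for algebraicity, only for the Deligne--Mumford property). Concretely, choosing a smooth surjection $U \to \Mgm$ from a scheme and base-changing along $\wm \to \Mgm$ yields a scheme $V = \wm \times_{\Mgm} U$, namely a vector bundle over $U$, together with a smooth surjection $V \to \wm$, i.e.\ an atlas; representability of the diagonal of $\wm$ follows from that of $\Mgm$ combined with the representability of $\wm \to \Mgm$. I expect the main obstacle to be bookkeeping rather than conceptual: checking the $fppf$ descent for the irregular types and confirming that base change preserves the vector-bundle description, both of which rest on the algebraic analogues of Lemma~\ref{bc-onB} and Proposition~\ref{irrt-vectbun} already set up in \cref{subsec-wac}.
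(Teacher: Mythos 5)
Your proposal is correct and follows essentially the same route as the paper: establish that $\wm$ is an $fppf$ stack via descent for sections of quasi-coherent sheaves (the paper cites \cite[\href{https://stacks.math.columbia.edu/tag/023T}{Tag 023T}]{stacks-project}), invoke the algebraicity of $\Mgm$, and conclude from the representability of $\wm \rightarrow \Mgm$ by the principle that a stack admitting a representable morphism to an algebraic stack is algebraic (the paper cites \cite[Proposition 4.5]{lmb00} where you spell out the atlas-and-diagonal mechanism by hand).
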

\begin{proof}
	The properties in the statement hold true for $\Mgm$, thanks to \cite[Proposition 4.5]{lmb00} and \cite[Example 4.4.13, Theorem 8.4.5]{ols16} (resp. \cite[Theorem 13.1.2]{ols16}) for $g\neq 1$ (resp. $g=1$; recall that $m\geq 1$ in this text). The fact that $\wm$ is a stack follows as in the proof of Proposition \ref{wmstack}, using \cite[\href{https://stacks.math.columbia.edu/tag/023T}{Tag 023T}]{stacks-project}. Finally, as the map $\wm \rightarrow \Mgm$ is representable, by \cite[Proposition 4.5]{lmb00} the stack $\wm$ is algebraic.
\end{proof}

\subsubsection{Admissible wild algebraic curves} We stratify $\wm$ according to the order of pole of the irregular types after evaluation at roots $\alpha \in \Phi$. Let $\ud=(d_{\alpha, i})_{\alpha \in \Phi, 1\leq i \leq m}$ with $0 \leq d_{\alpha,i}\leq p_i$. We define $\mathcal{WM}_{g, m}^{\tc, \leq \underline{p}, \leq \ud}$ to be the substack of $\mathcal{WM}_{g, m}^{\tc, \leq \underline{p}, \ud}$ whose $S$-points are wild algebraic curves $(\pi \colon C \rightarrow S, \ua, \uQ)$ such that, for $1 \leq i \leq m$, we have $\alpha \circ Q_i\in \Gamma(C, \T_{C, a_i}^{\leq d_{\alpha, i}})$. The map $\mathcal{WM}_{g, m}^{\tc, \leq \underline{p}, \leq \ud} \rightarrow \Mgm$ is representable: the fibre product with an $S$-point of $\Mgm$ corresponding to $(C, \ua)$ is the product over $S$ of the intersections of the schemes $\IT^{\tc, \leq p_i}_{S, a_i} \times_{\IT^{\leq p_i}_{S, a_i}}\IT^{\leq d_{\alpha, i}}_{S, a_i}$, where $\IT^{\leq d_{\alpha, i}}_{S, a_i}$ denotes the scheme obtained setting $\mathbf{t}=\mathbf{C}$ at the beginning of \cref{subsec-wac}. Each $\mathcal{WM}_{g, m}^{\tc, \leq \underline{p}, \leq \ud}$ is a closed substack of $\wm$; we define $\wmd \subset \mathcal{WM}_{g, m}^{\tc, \leq \underline{p}, \leq \ud}$ to be the complement of the union of the substacks $\mathcal{WM}_{g, m}^{\tc, \leq \underline{p}, \leq \underline{\mathbf{d}'}}$ for $\underline{\mathbf{d}'}<\ud$. Its $S$-points are wild algebraic curves $(\pi \colon C \rightarrow S, \ua, \uQ)$ such that $Q_i$ has pole order at most $p_i$, each $\alpha(Q_i)$ belongs to $\Gamma(C, \T_{C, a_i}^{\leq d_{\alpha, i}})$, and it does not belong to $\Gamma(C, \T_{C, a_i}^{\leq d_{\alpha, i}-1})$ anywhere on $S$ (i.e., the pullback of $\alpha(Q_i)$ to every fibre $C_s, s \in S$ does not belong to $\Gamma(C_s, \T_{C_s, a_{i, s}}^{\leq d_{\alpha, i}-1})$).

The fibre of the map $\wmd \rightarrow \Mgm$ over a $\C$-point is an $m$-fold product of products of hyperplane complements as in Theorem \ref{mainthm-an}(5). If this space is non-empty - a condition which only depends on $\ud$, and not on $\up$ - we say that $\ud$ is relevant.

\begin{prop}
	Fix integers $g\geq 0, m\geq 1$, and tuples
	\begin{equation*}
		\up=(p_1, \ldots, p_m) \in \Z_{\geq 0}^m, \quad \ud=(d_{\alpha, i})_{\alpha \in \Phi, 1\leq i \leq m}, \qquad 0 \leq d_{\alpha,i}\leq p_i,                                                                                                                                                  \end{equation*}
	such that $\ud$ is relevant. The stack $\wmd$ is Deligne--Mumford if and only if
	\begin{equation*}
		2g-2+m+\sum_{i}\max(\mathbf{d}_i) > 0.
	\end{equation*}
\end{prop}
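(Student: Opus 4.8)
The plan is to reduce the statement to a computation of automorphism groups and then treat separately the \emph{stable} range $2g-2+m>0$ and the two unstable cases $(g,m)\in\{(0,1),(0,2)\}$. Recall that, over $\C$, an algebraic stack locally of finite type is Deligne--Mumford if and only if its diagonal is unramified, equivalently if and only if the automorphism group scheme of every geometric point is finite (such a group scheme being automatically reduced in characteristic zero, hence étale). Since $\wmd$ is algebraic and the map $\wmd\rightarrow\Mgm$ is representable — it is an open substack of $\mathcal{WM}_{g,m}^{\tc,\leq\up,\leq\ud}$, which is representable over $\Mgm$ — the automorphism group scheme of a geometric point $(C,\ua,\uQ)$ is the closed subgroup of $\mathrm{Aut}(C,\ua)$ fixing the irregular types $\uQ$. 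The whole statement thus amounts to controlling when these groups are finite at \emph{every} point of the stratum.

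First I would dispose of the stable range. If $2g-2+m>0$ then the left-hand side of the inequality is automatically positive (as $\sum_i\max(\bd_i)\geq0$), so I only need to prove that $\wmd$ is Deligne--Mumford. In this range $\Mgm$ is Deligne--Mumford by the classical theory of moduli of pointed curves; choosing an étale surjection $U\rightarrow\Mgm$ from a scheme and pulling it back along the representable map $\wmd\rightarrow\Mgm$ produces a scheme $\wmd\times_{\Mgm}U$ together with an étale surjection onto $\wmd$, whence $\wmd$ is Deligne--Mumford. This settles every pair $(g,m)$ except $(0,1)$ and $(0,2)$, for which both implications carry content.

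For the remaining cases I would use the presentations $\mathcal{M}_{0,1}\cong BG_1$ and $\mathcal{M}_{0,2}\cong BG_2$, with $G_1=\mathbf{G}_a\rtimes\mathbf{G}_m$ the group of affine transformations of $\mathbf{P}^1$ fixing $\infty$ and $G_2=\mathbf{G}_m$ the group fixing $0$ and $\infty$. The Cartesian square expressing $\wmd$ as the quotient $[\IT^{\tc,\leq\up,\ud}/G_m]$, the algebraic analogue of \cref{g0m1} and \cref{g0m2}, identifies the geometric automorphism groups with the $G_m$-stabilisers of points of $\IT^{\tc,\leq\up,\ud}$; hence $\wmd$ is Deligne--Mumford if and only if every such stabiliser is finite. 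The point I would isolate carefully is that the governing invariant is $\max(\bd_i)$, the largest pole order seen after evaluating $Q_i$ at a root, and not the total pole order of $Q_i$: the central directions of $\tc$ are invisible to all roots (so they do not raise $\max(\bd_i)$) yet are seen with nonzero weight by the scaling torus, so they can make individual stabilisers finite without affecting the criterion.

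The stabiliser computations are precisely those of \cref{g0m1} and \cref{g0m2}, which concern only the $G_m$-action on the affine space $\IT^{\tc,\leq\up,\ud}$ and so transfer verbatim to the algebraic setting. For $m=2$ one has $2g-2+m=0$; the torus $\mathbf{G}_m$ acts on the coefficients with nonzero weights, the stabiliser of $(Q_1,Q_2)$ is finite unless $(Q_1,Q_2)=(0,0)$, and the origin lies in the stratum exactly when $\ud=0$, so $\wmd$ is Deligne--Mumford iff $\ud\neq0$, i.e. $\max(\bd_1)+\max(\bd_2)>0$, as claimed. For $m=1$ one has $2g-2+m=-1$: if $\max(\bd_1)\geq2$ then at every point one may choose a root $\alpha$ with $\alpha\circ Q_1$ of pole order $\geq2$, whose leading and subleading coefficients force the torus parameter to a root of unity and then determine the translation parameter, so all stabilisers are finite; whereas if $\max(\bd_1)\leq1$ the relevance of $\ud$ produces a point whose irregular type has degree $\leq1$, on which the translation subgroup $\mathbf{G}_a$ acts trivially, yielding an infinite stabiliser. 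Hence $\wmd$ is Deligne--Mumford iff $\max(\bd_1)\geq2$, i.e. $-1+\max(\bd_1)>0$. The main obstacle is this last analysis: for the ``only if'' direction one must exhibit, using relevance, an explicit low-degree point of the stratum carrying a positive-dimensional stabiliser, while for the ``if'' direction one must verify finiteness at every point and not merely generically — which is exactly where the hypothesis $\max(\bd_1)\geq2$, rather than any bound on the total pole order, is needed.
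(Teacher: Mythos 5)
Your proof is correct, and its overall skeleton is the paper's: handle the stable range via representability of $\wmd\rightarrow\Mgm$ over a Deligne--Mumford base, then analyse $(g,m)=(0,1),(0,2)$ through the quotient presentations coming from \cref{g0m1} and \cref{g0m2}. The genuine difference lies in how the ``if'' directions are concluded in the two unstable cases. The paper never invokes the finite-automorphism-group criterion: for $m=1$ with $\max(\bd_1)\geq 2$ it constructs the slice $Z_\alpha=\{\alpha(A_{d_\alpha-1})=0\}$ and identifies the stack with $[Z_\alpha/\mathbf{G}_m]$, a locally closed substack of a weighted projective stack; for $m=2$ with $\ud\neq 0$ it first reduces, along a representable forgetful map, to the case $\up'=(0,p_2)$, $\ud'=(0,\bd_2)$ --- a step needed there precisely because with poles at both $0$ and $\infty$ the $\mathbf{G}_m$-weights have mixed signs, so the quotient does not directly embed in a weighted projective stack --- and then argues the same way. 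You instead use that, in characteristic zero, an algebraic stack with finite-type diagonal is Deligne--Mumford if and only if all geometric automorphism groups are finite; the stabiliser computations of \cref{g0m1}--\cref{g0m2} (the leading coefficient of $\alpha\circ Q_1$ forces the torus parameter to be a root of unity and the subleading one then pins down the translation; for $m=2$ any nonzero coefficient bounds the stabiliser by roots of unity) then finish both unstable cases at once, with no slices and no $m=2$ reduction. This is slightly more economical, at the cost of the extra structural information the paper's weighted-projective embeddings provide (and which it records anyway). The ``only if'' directions agree in both treatments (a degree-$\leq 1$ point with $\mathbf{G}_a$-stabiliser when $\max(\bd_1)\leq 1$; the origin, i.e.\ the $\mathcal{M}_{0,2}$-substack, when $\ud=0$), and your two points of care are exactly right: finiteness must be checked at every point of the stratum rather than generically, since central higher-order coefficients (invisible to all roots) can make generic stabilisers finite even when $\max(\bd_1)\leq 1$; and conversely the $\mathbf{G}_a$-fixed point is produced from relevance by setting those central components to zero.
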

\begin{proof}
	If $g \geq 2$ (resp. $g=1$) then the statement follows from \cite[Proposition 4.5]{lmb00} and the fact that $\mathcal{M}_g$ (resp. $\mathcal{M}_{1, 1}$) is Deligne--Mumford by \cite[Theorem 8.4.5]{ols16} (resp. \cite[Theorem 13.1.2]{ols16}). If $g=0$ and $m \geq 3$ then $\mathcal{M}_{0, m}$ and $\wmd$ are algebraic varieties. Suppose that $g=0$ and $m=1$. As in \cref{g0m1} we see that $\wmd$ is a quotient of a space of irregular types by the affine group. If $\max(\mathbf{d}_1)\leq 1$ then $\wmd$ has complex points with automorphism group $\mathbf{G}_a$, therefore it is not Deligne--Mumford. If $\max(\mathbf{d}_1)>1$ then the argument in \cref{g0m1} shows that $\wmd$ is a locally closed substack of a weighted projective space, hence it is Deligne--Mumford. If $m=2$ and $\ud$ is identically zero then $\wm$ has a substack isomorphic to $\mathcal{M}_{0, 2}$ (parametrizing curves with trivial irregular types). Finally, suppose that $\ud=(\mathbf{d}_1, \mathbf{d}_2)$ is not identically 0. Assume, without loss of generality, that $\mathbf{d}_2\neq 0$; as in \cref{g0m2} we have a representable map $\mathcal{WM}_{0, 2}^{\tc, \leq \up, \ud}\rightarrow \mathcal{WM}_{0, 2}^{\tc, \leq \up', \ud'}$, hence it suffices to prove that $\mathcal{WM}_{0, 2}^{\tc, \leq \up', \ud'}$ is Deligne--Mumford. This is true because $\mathcal{WM}_{0, 2}^{\tc, \leq \up', \ud'}$ can be written, for the same reason as in \cref{g0m2}, as a quotient of a space of (non-zero) irregular types by a $\mathbf{G}_m$-action, making it a locally closed substack of a weighted projective stack.
\end{proof}

\subsubsection{A parametrization of non-empty strata} One can parametrise the non-empty spaces $\wmd$ in terms of suitable filtrations of $\Phi$. To do so, we may take a fixed Riemann surface of genus $g$ with one marked point $a$, and look at a space $\IT^{\tc, \leq p}$ of irregular types with pole at $a$ of order bounded above by a given integer $p\geq 0$; we want to parametrise non-empty strata of the root stratification on $\IT^{\tc, \leq p}$.

Define a Levi subsystem of $\Phi$ to be a subset $\Phi'\subset \Phi$ such that $\mathrm{span}_\C(\Phi')\cap \Phi=\Phi'$. A sequence $(\Phi_1, \Phi_2, \ldots)$ of Levi subsystems of $\Phi$ such that $\Phi_i \subset \Phi_{i+1}$ for $i \geq 1$ will be called a Levi filtration of $\Phi$. Such a filtration is of depth $d$ if $\Phi_d \subsetneq \Phi$ and $\Phi_{d+1}=\Phi$. We denote by $\mathcal{L}_{\Phi}^{\leq p}$ the set of Levi filtrations of $\Phi$ of depth at most $p$.

Let us identify $\IT^{\tc, \leq p}$ with $\tc^p$, sending an irregular type $Q=\frac{A_1}{z}+\ldots +\frac{A_p}{z^p}$ to $(A_1, \ldots, A_p)$. We attach to $Q$ a Levi filtration obtained setting $\Phi_i=\{\alpha \in \Phi \mid \alpha(A_j)=0 \text{ if } i \leq j \leq p \}$ for $i \leq p$, and $\Phi_i=\Phi$ for $i>p$; in other words, the pole order of $\alpha(Q)$ equals $i\geq 0$ if and only if $\alpha$ belongs to $\Phi_{i+1}\smallsetminus \Phi_i$ (where we set $\Phi_0=\varnothing$). In particular, we see that the filtration $(\Phi_i)_{i \geq 1}$ only depends on the root stratum to which $Q$ belongs.

Conversely, given a Levi filtration of depth at most $p$ we define $\mathbf{d}=(d_\alpha)_{\alpha \in \Phi}$ setting $d_\alpha=i$ if $\alpha \in \Phi_{i+1}\smallsetminus \Phi_i$. To show that the stratum $\IT^{\tc, \leq p, \mathbf{d}}$ is non-empty note that, if $\Phi_i\subsetneq \Phi_{i+1}$, then for each $\alpha \in \Phi_{i+1} \smallsetminus \Phi_i$ the intersection of the kernels of the roots in $\Phi_i$ meets the complement of $\ker(\alpha)$; therefore
\begin{equation*}
	(\bigcap_{\alpha \in \Phi_i} \ker(\alpha))\cap (\bigcap_{\alpha\in \Phi_{i+1}\smallsetminus \Phi_i}(\tc \smallsetminus \ker(\alpha)))
\end{equation*}
is non-empty (cf. \cite[Lemma 2.2.1]{cfrw}). Hence, there are $A_1, A_2, \ldots, A_p$ such that $\frac{A_1}{z}+\ldots +\frac{A_p}{z^p}$ belongs to $\IT^{\tc, \leq p, \mathbf{d}}$.

To sum up, non-empty strata of the root stratification of $\IT^{\tc, \leq p}$ are in bijection with $\mathcal{L}_{\Phi}^{\leq p}$.

\appendix

\section{Background on analytic stacks}
\label{sec:appendix}

In this appendix we recall and give references for some generalities on stacks and their fundamental group which are needed in the text.
The material in this section is standard, and is included for the reader's convenience; for the same reason we try to provide some motivation for the concept of (analytic) stack. Needless to say, this is not meant to be an introduction to the subject, for which we refer the reader to the references given below.

\subsection{Stacks}

We collect here the background material on stacks that is used in the text.
We will focus on analytic stacks, and our main references will be~\cite{noo05, beno06, hei05}.

\subsubsection{Motivation: the moduli space of sections of a locally free sheaf}
\label{funp-vectbun}

Let us start by showing the classical theory of moduli spaces in action from a modern viewpoint, in a simple example of interest in this document.

Fix a complex manifold $X$ and a locally free sheaf $\mathcal{E}$ of $\Oo_X$-modules of finite rank $n$.
For every complex manifold $B$ with a map $\varphi \colon B\rightarrow X$ we can consider the global sections $\Gamma(B, \varphi^* \mathcal{E})$ of the pullback of $\mathcal{E}$ to $B$.
We would like to construct the moduli space of all such global sections, i.e. a complex manifold $V(\mathcal{E})\xrightarrow{\pi} X$ such that there are natural bijections, for every $\varphi \colon B\rightarrow X$,
\begin{equation*}
	\Gamma(B, \varphi^* \mathcal{E})\xrightarrow{\sim} \{\sigma \colon B \rightarrow V(\mathcal{E}) \mid \pi \circ \sigma=\varphi\}.
\end{equation*}

We claim that the vector bundle $\pi \colon V(\mathcal{E})\rightarrow X$ attached to the locally free sheaf $\mathcal{E}$ satisfies this property.
Indeed, recall that holomorphic sections of $\pi$ correspond to elements of $\Gamma(X, \mathcal{E})$.
Now for every $\varphi \colon B \rightarrow X$ we can consider the pullback $\varphi^*V(\mathcal{E})\rightarrow B$; it is the subset of $B\times V(\mathcal{E})$ consisting of points $(b, v)$ such that $\varphi(b)=\pi(v)$, and the inclusion $\varphi^*V(\mathcal{E}) \subset B \times V(\mathcal{E})$ is a holomorphic immersion.
It follows that maps $\sigma \colon B \rightarrow V(\mathcal{E})$ such that $\pi \circ \sigma=\varphi$ correspond bijectively to holomorphic sections $B \rightarrow \varphi^*V(\mathcal{E})$ of the projection $\varphi^*V(\mathcal{E})\rightarrow B$.
But $\varphi^*V(\mathcal{E})$ is the vector bundle attached to $\varphi^*(\mathcal{E})$, hence such sections correspond to elements of $\Gamma(B, \varphi^*(\mathcal{E}))$.

\subsubsection{Complex manifolds as sheaves}

In~\cref{funp-vectbun}, we have described the vector bundle $V(\mathcal{E})$ attached to a locally free sheaf $\mathcal{E}$ in terms of the holomorphic maps from an arbitrary manifold $B$ to it: holomorphic maps $B\rightarrow V(\mathcal{E})$ are in natural bijection with couples $(\varphi, s)$ consisting of a holomorphic map $\varphi \colon B \rightarrow X$ and an element $s \in \Gamma(B, \varphi^*\mathcal{E})$.

In general, for every complex manifold $X$, we can look at the functor $h(X) \colon \man^{op} \rightarrow \mathrm{Sets}$ sending $B$ to the set $\mathrm{Hom}(B, X)$ of holomorphic maps from $B$ to $X$.
A holomorphic map $X \rightarrow Y$ induces a natural transformation of functors $h(X)\rightarrow h(Y)$.
This gives rise to the Yoneda embedding
\begin{align*}
	\man & \rightarrow \mathrm{Functors}(\man^{\mathrm{op}}, \mathrm{Sets}) \\
	X    & \mapsto h(X).
\end{align*}
By Yoneda's lemma, the functor $X \mapsto h(X)$ is fully faithful; in other words, we can see the category of complex manifolds as a subcategory of the category $\mathrm{Functors}(\man^{op}, \mathrm{Sets})$.
Constructing a (fine) moduli space amounts to formulating a moduli problem, i.e., writing down a functor $M \colon \man^{\mathrm{op}}\rightarrow \mathrm{Sets}$, and proving that $M$ is isomorphic to $h(X)$ for some complex manifold $X$.
If such an $X$ exists, it is unique up to isomorphism, and we say that the moduli problem $M$ is representable by $X$.

One feature distinguishing functors of the form $h(X)$ from arbitrary functors is that the former are sheaves on $\man$.
This means that, if $(B_i)_{i \in I}$ is an open cover of a manifold $B$, and $b_i \in h(X)(B_i)$ are elements agreeing on the intersections $B_i \cap B_j$, then there is a unique $b\in h(X)(B)$ restricting to $b_i$ on $B_i$.
This is because the elements $b_i$ are holomorphic maps $B_i\rightarrow X$, which can be glued to a holomorphic map $B \rightarrow X$ if they agree on all the intersections $B_i \cap B_j$.

\subsubsection{A moduli problem which is not a sheaf}

Many moduli problems of interest turn out not to be representable, often because they are not sheaves on $\man$.

For example, consider the functor $M^{/iso}_{1, 1}$ sending $B$ to the set of isomorphism classes of elliptic curves over $B$.
These are couples ($\pi \colon \Sigma\rightarrow B, \sigma$) consisting of a proper submersion $\pi$ of complex manifolds whose fibres are one dimensional complex tori, and a holomorphic section $\sigma \colon B \rightarrow \Sigma$ of $\pi$; isomorphisms are biholomorphisms commuting with the structure map to $B$ and with the sections.

We claim that $M^{/iso}_{1, 1}$ is not a sheaf on $\man$.
Indeed, take $B=\C\smallsetminus \{0\}$ and consider the product $B \times \mathbf{P}^2(\C)$, whose elements we denote by $(b, [x: y: z])$.
Let $\Sigma\subset B \times \mathbf{P}^2(\C)$ be the submanifold cut out by the equation $by^2z=x^3-z^2x$.
Let $\pi \colon \Sigma \rightarrow B$ be the map induced by projection of $B \times \mathbf{P}^2(\C)$ on the first component, and $\sigma$ be the map sending $b$ to $(b, [0: 1: 0])$.
We can write $B=B_1 \cup B_2$, where $B_1$ (resp. $B_2$) is the complement in $\C$ of the half-line $\R_{\geq 0}$ (resp. $\R_{\leq 0}$).
Over each $B_i$ we can choose a holomorphic square root function $r_i \colon B_i \rightarrow \C$.
Then, the map sending $(b, [x: y: z])$ to $(b, [x: r_i(b)y: z])$ induces an isomorphism between $\Sigma$ and the elliptic curve $\Sigma_0$ with equation $y^2z=x^3-z^2x$ over $B_i$.
However, there is no isomorphism between $\Sigma$ and $\Sigma_0$ over the whole $B$---the monodromy representation attached to $\Sigma$ is non-trivial, cf.~\cite[Proposition 3.3]{lvo19}---hence $M^{/iso}_{1, 1}$ is not a sheaf.

\subsubsection{Stacks}
\label{stacks}

The problem in the above example comes from the following phenomenon: over each $B_i$ we have an isomorphism $f_i \colon \Sigma \rightarrow \Sigma_0$, but over $B_1 \cap B_2$ the isomorphisms $f_1, f_2$ differ by a non-trivial automorphism of $\Sigma_0$ (sending $y$ to $-y$).
Hence, we see that the existence of non-trivial automorphisms of elliptic curves is intimately related to the fact that $M^{/iso}_{1, 1} \colon \man^{op}\rightarrow \mathrm{Sets}$ is not a sheaf.

Grothendieck's idea to handle this issue is to make automorphisms part of the picture: rather than looking at functors $M \colon \man^{op} \rightarrow \mathrm{Sets}$, one considers (2-)functors
\begin{equation*}
	\M \colon \man^{op} \rightarrow \mathrm{Groupoids}.
\end{equation*}
Explicitly, for every complex manifold $B$ we have a groupoid $\M(B)$---i.e., a category all of whose morphisms are isomorphisms---and for every holomorphic map $f \colon B' \rightarrow B$ we have a functor $\M(f)^* \colon \M(B)\rightarrow \M(B')$.
Furthermore, for each $g \colon B'' \rightarrow B'$ we have a natural transformation $\tau_{f, g} \colon \M(g)^*\circ \M(f)^* \Rightarrow \M(f\circ g)^*$.
Finally, the natural transformations $\tau_{f, g}$ are required to be associative whenever we have a chain of three morphisms $B'''\xrightarrow{h} B'' \xrightarrow{g}B' \xrightarrow{f} B$.

For instance, one replaces the functor $M^{/iso}_{1, 1}$ from the previous section with $\M_{1, 1} \colon \man^{op} \rightarrow \mathrm{Groupoids}$ sending $B$ to the groupoid whose objects are elliptic curves over $B$, and morphisms are isomorphisms.

We say that $\M \colon \man^{op} \rightarrow \mathrm{Groupoids}$ is a stack if it satisfies the following two conditions (cf.~\cite[Definition 1.1, Remark 1.2]{hei05}).\footnote{
	Below the pullback of an object $s \in \M(B)$ to an open $B' \subset B$ is denoted by $s_{|B'}$.}
\begin{description}
	\item[Objects glue] take an open cover $(B_i)_{i \in I}$ of a manifold $B$, objects $s_i \in \M(B_i)$ and isomorphisms $\varphi_{ij} \colon s_{i|B_i \cap B_j}\rightarrow s_{j|B_i \cap B_j}$ satisfying the cocycle condition $\varphi_{jk}\circ \varphi_{ij}=\varphi_{ik}$ on triple intersections $B_i \cap B_j \cap B_k$.
	      Then there exists $s \in \M(B)$ together with isomorphisms $\varphi_i \colon s_{|B_i}\rightarrow s_i$ such that $\varphi_{ij}=\varphi_j \circ \varphi_i^{-1}$.
	\item[Isomorphisms glue] take an open cover $(B_i)_{i \in I}$ of a manifold $B$, and two objects $s, s' \in \M(B)$.
	      Then, given isomorphisms $\varphi_i \colon s_{|B_i}\rightarrow s'_{|B_i}$ which agree on double intersections $B_i \cap B_j$, there exists a unique $\varphi \colon s \rightarrow s'$ restricting to $\varphi_i$ on each $B_i$.
\end{description}

A morphism of stacks $F \colon \M \rightarrow \mathsf{N}$ is a ``natural transformation of groupoid-valued functors'': precisely, it consists of the datum of a functor $F_B \colon \M(B) \rightarrow \mathsf{N}(B)$ for each manifold $B$, together with equivalences $\mathsf{N}(f)^*\circ F_B\simeq F_{B'} \circ \M(f)^*$ for each $f \colon B' \rightarrow B$.

Considering every set as a groupoid whose only morphisms are the identity morphisms, sheaves on $\man$ (in particular, objects of $\man$ via the Yoneda embedding) become special examples of stacks.
In this appendix, we will denote the stack associated with a manifold $B$ by $\underline{B}$; in the body of the text we follow the common abuse of denoting the stack associated with a manifold $B$ still by $B$.

\begin{rem}
	\label{2mor}
	There is a notion of morphism between two morphisms of stacks, which we will call transformation of morphisms.

	Namely, let $F_1, F_2 \colon \M \rightarrow \mathsf{N}$ be morphisms of stacks.
	By definition, for each $B$ we have functors $F_{1, B}, F_{2, B} \colon \M(B) \rightarrow \mathsf{N}(B)$. A transformation from $F_1$ to $F_2$ is a collection of natural transformations from  $F_{1, B}$ to $F_{2, B}$ for every complex manifold $B$, compatible with pullbacks (cf.~\cite[Remark 3.1.4]{hei05}; formally, stacks form a 2-category with invertible 2-morphisms).
\end{rem}

\subsubsection{Atlases and analytic stacks}
\label{atlas}

To sum up, the condition defining a stack can be thought of as a ``sheaf condition'' for functors valued in groupoids.
Much as arbitrary sheaves on $\man$ have little geometric significance, we must impose additional conditions on stacks in order to be able to do geometry with them.
This is captured by the crucial notion of atlas, a ``good enough'' approximation of a stack $\M$ by a complex manifold $X$.
More precisely, we want to require the existence of a surjective submersion $X\rightarrow \M$; let us recall the precise meaning of this expression.

Firstly, we recall the definition of fibre product $\M_1 \times_\mathsf{N} \M_2$ of two stacks $F_1 \colon \M_1\rightarrow \mathsf{N}, F_2 \colon \M_2\rightarrow \mathsf{N}$.
For $B \in \man$, objects of $\M_1 \times_\mathsf{N} \M_2(B)$ are triples $(s_1, s_2, \varphi)$ consisting of an object $s_1 \in \M_1(B)$, an object $s_2 \in \M_2(B)$, and an isomorphism $\varphi \colon F_{1, B}(s_1)\rightarrow F_{2, B}(s_2)$.
A morphism from $(s_1, s_2, \varphi)$ to $(s_1', s_2', \varphi')$ is a couple $(\varphi_1, \varphi_2)$ consisting of a morphism $\varphi_1 \colon s_1 \rightarrow s_1'$ and a morphism $\varphi_2 \colon s_2 \rightarrow s_2'$, such that $F_{2, B}(\varphi_2)\circ \varphi=\varphi' \circ F_{1, B}(\varphi_1)$ for every $B$.
We say that a morphism of stacks $F \colon \M \rightarrow \mathsf{N}$ is representable if, for every map of stacks $\underline{B}\rightarrow \mathsf{N}$ from a manifold $B$, the fibre product $\underline{B}\times_\mathsf{N} \M$ is of the form $\underline{B'}$ for a manifold $B'$.

\begin{rem}
	Let us point out that the above notion of representability is very strong: if $\M, \mathsf{N}$ are (stacks attached to) manifolds then we are asking in particular that all the fibres of $F$ are manifolds, which is far from true for a general holomorphic map.

	A better approach to the general theory consists in replacing $\man$ by the category of complex analytic spaces, which admits fibre products (cf.~also~\cite[Remark 3.4]{beno06}).
	However, the morphisms of stacks of interest in this document will satisfy the above condition. Hence, to make the text more accessible, we have chosen to work with the above definition.
\end{rem}

\begin{defin}\label{def:anstack}
	A stack $\M$ is called analytic stack if there is a manifold $X$ and a morphism of stacks $\underline{X}\rightarrow \M$ which is a surjective submersion, i.e. a representable map such that for every manifold $B$ and every morphism $\underline{B}\rightarrow \M$ the projection $\underline{B}\times_{\M}\underline{X}\rightarrow \underline{B}$ is (induced by) a surjective holomorphic submersion of complex manifolds.

	Any manifold $\underline{X} \rightarrow \M$ with the previous property is called an atlas of the stack $\M$.
\end{defin}

\subsubsection{Moduli stacks of curves with marked points}
\label{def-mgma}

Let $g, m$ be two nonnegative integers.
The (2-)functor $\Mgma$ sending a complex manifold $B$ to the groupoid of families of compact Riemann surfaces of genus $g$ over $B$ with $m$ (ordered) disjoint families of marked points is an analytic stack.
If $g \geq 1, m \geq 1$ or $g=0$ and $m \geq 3$, it is a quotient stack $[T_{g, m}/\Gamma_{g, m}]$, where $T_{g. m}$ is a Teichm\"{u}ller space and $\Gamma_{g, m}$ is the (pure) mapping class group.
This follows from the description of the functor of points of $T_{g, m}$ as a moduli space of Riemann surfaces with marked points and ``Teichm\"{u}ller structure''; cf.~\cite[Th\'eor\`eme 3.1]{gro62}, surveyed in \cite{acjp16}, for the case $m=0$, and \cite[Theorem 3.6, Proposition 3.9]{eng75} or \cite{ear73} for the general case.

\subsection{The fundamental group of an analytic stack}

In this section we give the definition of the (topological) fundamental group of an analytic stack, borrowed from~\cite{noo05}.
This is done in two steps: firstly, one defines the topological stack ``underlying'' an analytic stack; secondly, one extends the usual definition of fundamental group of a topological space to topological stacks.
The first step rests on the dictionary between analytic stacks and (complex analytic) groupoids, which gives a concrete way to represent stacks in terms of ``manifolds generators and relations''.

\subsubsection{Presentation of analytic stacks via groupoids}

Let $\M$ be an analytic stack and $\underline{X}\rightarrow \M$ an atlas.
The fibre product $\underline{X}\times_{\M} \underline{X}$ is of the form $\underline{R}$ for a manifold $R$ endowed with two projections to $X$, which define the source and target morphism of a groupoid $X_\bullet$ (in $\man$).
Conversely, each groupoid $R\rightrightarrows X$ gives rise to a quotient stack $[X/R]$~\cite[\S~3.2]{noo05},~\cite[\S~3]{hei05}.
Heuristically, the groupoid $X_\bullet$ can be thought of as a ``presentation'' of the stack $\M$: the two projection maps describe which points of $X$ should be identified to obtain $\M$.
However, it can happen that a point $x \in X$ gets identified with itself via more than one $r \in R$; this information is remembered by the stack $\M$---and is related precisely to non-trivial automorphisms of the objects parametrised by $\M$---but it would be lost looking at the quotient topological space $X/R$.

\subsubsection{Topological stack associated with an analytic stack}
\label{topst}

Let $\M$ be an analytic stack; choose an atlas $\underline{X} \rightarrow \M$ and let $X_\bullet=R \rightrightarrows X$ be the resulting groupoid.
Consider the topological spaces $X^{top}$ and $R^{top}$ underlying $X$ and $R$, and the groupoid in topological spaces $X_{\bullet}^{top}=R^{top} \rightrightarrows X^{top}$; the associated stack on the category of topological spaces is denoted by $\M^{top}$, and is called the topological stack attached to $\M$ (note that $\M^{top}$ is a topological stack in the sense of~\cite[Definition 13.8]{noo05}, using as local fibrations those coming from~\cite[Example 13.1.3]{noo05}).
The fact that $\M^{top}$ is well defined, i.e. it does not depend on the choice of the atlas, follows from the next lemma.

\begin{lem}
	Let $\M$ be an analytic stack and let $\underline{X}_1\rightarrow \M$, $\underline{X}_2\rightarrow \M$ be two atlases.
	The quotient stacks $[X^{top}_{1}/R^{top}_1]$ and $[X^{top}_{2}/R^{top}_2]$ are equivalent.
\end{lem}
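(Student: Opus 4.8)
The plan is to pass to a common refinement of the two atlases and to show that the resulting analytic groupoids are Morita equivalent in a way that survives the passage to underlying topological spaces.

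First I would set $\underline{X}_3 = \underline{X}_1 \times_\M \underline{X}_2$. Since the atlas map $\underline{X}_2 \rightarrow \M$ is a representable surjective submersion, its base change along $\underline{X}_1 \rightarrow \M$ shows that $\underline{X}_3$ is (the stack attached to) a manifold, and that the projection $q_1 \colon \underline{X}_3 \rightarrow \underline{X}_1$ is a surjective submersion of manifolds; symmetrically for $q_2 \colon \underline{X}_3 \rightarrow \underline{X}_2$. As a composite of surjective submersions, $\underline{X}_3 \rightarrow \M$ is again an atlas, with associated groupoid $X_{3, \bullet} = R_3 \rightrightarrows X_3$, where $R_3 = X_3 \times_\M X_3$.

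Next I would check that each $q_i$ induces a Morita equivalence $X_{3, \bullet} \rightarrow X_{i, \bullet}$ of analytic groupoids. On objects $q_i$ is a surjective submersion, which gives essential surjectivity (one may use identity arrows over the image). For full faithfulness, the induced map on arrows sits in a square
\begin{center}
	\begin{tikzcd}
		R_3 \arrow[r] \arrow[d] & R_i \arrow[d] \\
		X_3 \times X_3 \arrow[r, "q_i \times q_i"] & X_i \times X_i
	\end{tikzcd}
\end{center}
which is Cartesian: both $R_3 = X_3 \times_\M X_3$ and the fibre product $(X_3 \times X_3) \times_{X_i \times X_i} R_i$ parametrise pairs of points of $X_3$ together with an isomorphism between their images in $\M$, the latter because $R_i = X_i \times_\M X_i$ records precisely such isomorphisms and the map $X_3 \rightarrow \M$ factors through $q_i$.

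The final---and main---step is to transport this equivalence to topological groupoids. All the fibre products above are transverse, being formed over the representable submersions defining the atlases; hence the functor assigning to a complex manifold its underlying topological space preserves them, and it sends surjective submersions to open surjections admitting continuous local sections. Applying it keeps the displayed square Cartesian, so $R_3^{top} = (X_3^{top} \times X_3^{top}) \times_{X_i^{top} \times X_i^{top}} R_i^{top}$, while $q_i^{top} \colon X_3^{top} \rightarrow X_i^{top}$ remains an open surjection with local sections; this is exactly the assertion that $X_{3, \bullet}^{top} \rightarrow X_{i, \bullet}^{top}$ is a Morita equivalence of topological groupoids. Since Morita equivalent topological groupoids have equivalent quotient stacks (cf.~\cite[\S~3.2]{noo05},~\cite[\S~3]{hei05}), we obtain
\begin{equation*}
	[X_1^{top}/R_1^{top}] \simeq [X_3^{top}/R_3^{top}] \simeq [X_2^{top}/R_2^{top}],
\end{equation*}
which is the claim. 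The crux of the argument is the compatibility of the underlying-topological-space functor with the transverse fibre products that build the groupoids; once this is secured, Morita invariance of the associated quotient stack does the rest.
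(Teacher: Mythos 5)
Your proof is correct, and it begins exactly where the paper's does: with the common refinement $\underline{X}_3=\underline{X}_1\times_\M \underline{X}_2$, and with the key observation that the relevant structure survives passage to underlying topological spaces because all fibre products involved are taken along submersions. The two arguments then diverge in how Morita equivalence is packaged. The paper never forms the groupoid of $X_3$: it regards $X_3$ itself as a bibundle, observing that the projections $X_3\rightarrow X_1$ and $X_3\rightarrow X_2$ are (locally trivial) $X_{2,\bullet}$- and $X_{1,\bullet}$-bundles whose actions commute, and concludes in one stroke from~\cite[Lemma 3.2]{hei05}, which says precisely that such a bibundle induces an equivalence of quotient stacks. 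You instead promote $X_3$ to an atlas of $\M$, form its groupoid $R_3\rightrightarrows X_3$, check that the two projection functors $X_{3,\bullet}\rightarrow X_{i,\bullet}$ are weak (Morita) equivalences of groupoids---essential surjectivity because $q_i$ is a surjective submersion, full faithfulness from your Cartesian square, which is valid since $X_3\rightarrow\M$ is $2$-isomorphic to $a_i\circ q_i$---and conclude by the zigzag $[X_1^{top}/R_1^{top}]\simeq [X_3^{top}/R_3^{top}]\simeq [X_2^{top}/R_2^{top}]$. Your route costs two extra verifications the paper does not need (that $\underline{X}_3\rightarrow\M$ is again an atlas, and the Cartesian square), but in exchange it avoids checking local triviality of the bundle structures and rests only on the standard invariance of the quotient stack under weak equivalences of topological groupoids. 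Both are legitimate; the paper's is shorter mainly because Heinloth's bibundle lemma is quoted wholesale, while yours is closer to a self-contained argument from the definitions.
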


\begin{proof}
	The lemma can be proved looking at an atlas refining $\underline X_1$, $\underline X_2$, as sketched in~\cite[p. 79]{noo05}.
	For completeness, let us give the details.
	The fibre product $\underline{X}_1\times_\M \underline{X}_2$ is of the form $\underline{X}_3$ for a manifold $X_3$.
	The projection $X_3\rightarrow X_1$ (resp. $X_3\rightarrow X_2$) is a (locally trivial) $X_{2, \bullet}$-bundle (resp. $X_{1, \bullet}$-bundle) in the sense of~\cite[\S~3]{hei05}, and the two actions defining the bundle structures commute.
	Hence, the same properties are true for the maps $X_3^{top}\rightarrow X_i^{top}$ for $i=1, 2$, and the lemma follows from~\cite[Lemma 3.2]{hei05}.
\end{proof}

\subsubsection{The fundamental group}

A pointed analytic stack is a couple $(\M, x)$ consisting of an analytic stack $\M$ and a map $\underline{*} \rightarrow \M$, corresponding to an object $x \in \M(*)$ (the image of the identity morphism of $*$).
We denote by $(\M^{top}, x)$ the associated pointed topological stack. Any pointed topological space $(T, x)$ gives rise to a pointed topological stack $(\underline{T}, x)$.
The following definition (coming from~\cite[Definition 17.5]{noo05}) is analogous to the usual definition of the fundamental group of a topological space; the subtlety and interest of the definition stem from the fact that the expressions ``homotopy'' and ``map of pairs'' have a more refined meaning in the world of stacks than in the context of topological spaces.
This will be recalled below.

\begin{defin}
	\label{fundgp-stack}

	The fundamental group of a pointed analytic stack $(\M, x)$, denoted by $\pi_1(\M, x)$, is the set of homotopy classes of maps of pairs $(\underline{S}^1, 1)\rightarrow (\M^{top}, x)$, endowed with the group structure defined in~\cite[pp. 59, 60]{noo05}.
\end{defin}

\begin{rem}
	Let us give some comments on the previous definition.
	The main point is that the notions of map of pairs and homotopy between continuous maps of topological spaces can be expressed via the commutativity of certain diagrams.
	However, as explained in Remark~\ref{2mor}, in the world of stacks we have a notion of transformation between morphisms; to extend the classical definitions to the context of stacks, one includes a transformation from a map $F_1$ to a map $F_2$ whenever usually one would have asked the two maps to be equal.

	For instance, a map $(\underline{S}^1, 1)\rightarrow (\M^{top}, x)$ consists by definition~\cite[Definition 17.1]{noo05} of the datum of a morphism $F \colon \underline{S}^1\rightarrow \M^{top}$ together with a transformation between the composition $\underline{*}\xrightarrow{1} \underline{S}^1 \rightarrow \M^{top}$ and the map $\underline{*}\rightarrow \M^{top}$ corresponding to $x$.
	Such a transformation is induced by an isomorphism between $F(1)$ and $x$.
	In other words, we do not ask $F$ to map $1$ to $x$, but only to an object isomorphic to $x$, and we incorporate isomorphisms $F(1)\simeq x$ into our definition of ``loop''.
	In particular, we see that automorphisms of objects of $\M(*)$ contribute to the fundamental group of the stack, in a way explained more precisely in~\cite[\S~17]{noo05}.
	Finally, the notion of homotopy between maps of pairs is defined in~\cite[Definition 17.2]{noo05}.
\end{rem}

\begin{rem}\leavevmode
	\begin{enumerate}
		\item If $\M = \underline X$ for a manifold $X$, then the transformations mentioned in the previous remark are always trivial, hence Definition~\ref{fundgp-stack} recovers the usual fundamental group of $(X, x)$.

		\item There is an alternative way to define $\pi_1(\M^{top}, x)$ only involving fundamental groups of topological spaces: choose an atlas $\underline X\rightarrow \M$ and consider the topological groupoid $X_\bullet^{top}=R^{top} \rightrightarrows X^{top}$.
		      Consider the classifying space $BX_\bullet$ of $X_\bullet$, as defined in~\cite[\S~4.1]{noo12}.
		      It comes with a natural map $BX_{\bullet}\rightarrow \M$~\cite[Proposition 6.1]{noo12}; one defines $\pi_1(\M, x)=\pi_1(BX_\bullet, \tilde{x})$ for a point $\tilde{x}$ above $x$. The fact that this definition is independent of choices and coincides with the one given above follows from~\cite[proofs of Theorems 10.5--6.3]{noo12}.
	\end{enumerate}
\end{rem}

\subsubsection{Example}
\label{fundgp-quot}

The approach described in the previous remark allows us to describe fundamental groups of quotient stacks, cf.~\cite[Remark 4.3]{noo12}.
Firstly, if $\M=[*/G]$ for a (discrete) group $G$ then $X=*$ and $BX_\bullet=EG/G$, where $EG$ is the universal principal $G$-bundle (in particular, it is a simply connected space on which $G$ acts freely), so $\pi_1(\M, *)=G$.
More generally, if $G$ acts on a topological space $X$ and $\M$ is the quotient stack $[X/G]$ (i.e. it is attached to the groupoid $X_\bullet=G \times X \rightrightarrows X$ whose maps are the projection on $X$ and the $G$-action) then $BX_\bullet$ is the quotient $X\times_G EG$ of $X\times EG$ by the antidiagonal action of $G$ (Borel construction).

In particular, if $X$ is simply connected then $\pi_1([X/G], x)=G$ (regardless of the stabilisers of points of $X$).
For instance, with the notation of~\cref{def-mgma}, the fundamental group of $\Mgma$ is $\Gamma_{g, m}$.

\bibliographystyle{amsalpha}
\bibliography{wild}

\end{document}